\documentclass[twoside]{article}

\usepackage[preprint]{aistats2026}

\usepackage[utf8]{inputenc}
\usepackage[T1]{fontenc}
\usepackage[round]{natbib}

\usepackage{amsfonts}
\usepackage{amssymb}
\usepackage{mathtools}
\usepackage{amsthm}
\usepackage{aliascnt}
\usepackage{booktabs}
\usepackage{nicefrac}
\usepackage{graphicx}
\usepackage{enumitem}
\usepackage{microtype}
\usepackage{xcolor}
\usepackage{xparse}
\usepackage{tikz}
\usepackage{algorithm}
\usepackage{algpseudocode}
\usepackage{float}

\newcommand\blfootnote[1]{%
  \begingroup
  \renewcommand\thefootnote{}\footnote{#1}%
  \addtocounter{footnote}{-1}%
  \endgroup
}

\let\epsilon\varepsilon

\usepackage{amsmath}

\usepackage{url}
\usepackage{xurl}
\usepackage[colorlinks=true,allcolors=blue,linktoc=all]{hyperref}
\usepackage[nameinlink,capitalize,noabbrev]{cleveref}

\theoremstyle{plain}
\newtheorem{theorem}{Theorem}[section]
\newaliascnt{proposition}{theorem}
\newtheorem{proposition}[proposition]{Proposition}
\aliascntresetthe{proposition}
\newaliascnt{lemma}{theorem}
\newtheorem{lemma}[lemma]{Lemma}
\aliascntresetthe{lemma}
\newaliascnt{corollary}{theorem}

\aliascntresetthe{corollary}
\newaliascnt{fact}{theorem}
\newtheorem{fact}[fact]{Fact}
\aliascntresetthe{fact}
\theoremstyle{definition}
\newaliascnt{definition}{theorem}
\newtheorem{definition}[definition]{Definition}
\aliascntresetthe{definition}
\newaliascnt{assumption}{theorem}
\newtheorem{assumption}[assumption]{Assumption}
\aliascntresetthe{assumption}
\theoremstyle{remark}
\newaliascnt{remark}{theorem}
\newtheorem{remark}[remark]{Remark}
\aliascntresetthe{remark}

\crefname{equation}{}{}
\crefname{theorem}{Theorem}{Theorems}
\Crefname{theorem}{Theorem}{Theorems}
\crefname{proposition}{Proposition}{Propositions}
\Crefname{proposition}{Proposition}{Propositions}
\crefname{lemma}{Lemma}{Lemmas}
\Crefname{lemma}{Lemma}{Lemmas}
\crefname{corollary}{Corollary}{Corollaries}
\Crefname{corollary}{Corollary}{Corollaries}
\crefname{assumption}{Assumption}{Assumptions}
\Crefname{assumption}{Assumption}{Assumptions}
\crefname{definition}{Definition}{Definitions}
\Crefname{definition}{Definition}{Definitions}
\crefname{fact}{Fact}{Facts}
\Crefname{fact}{Fact}{Facts}
\crefname{remark}{Remark}{Remarks}
\Crefname{remark}{Remark}{Remarks}
\Crefname{ALG@line}{Line}{Lines}
\crefname{ALG@line}{Line}{Lines}

\makeatletter
\providecommand{\theHALG@line}{}
\renewcommand{\theHALG@line}{\thealgorithm.\arabic{ALG@line}}
\makeatother

\DeclareMathOperator*{\argmin}{arg\,min}

\newcommand{\R}{\mathbb{R}}
\newcommand{\innp}[1]{\langle #1 \rangle}
\newcommand{\norm}[1]{\| #1 \|}

\newcommand{\grad}{\nabla}

\DeclareRobustCommand{\defi}{\stackrel{\mathrm{\scriptscriptstyle def}}{=}}

\newcommand{\poly}{\operatorname{poly}}

\newcommand{\AGD}{\textnormal{AGD}}
\newcommand{\AGDp}{ \hyperref[alg:generalized-agd]{\textnormal{AGD+}}}

\makeatletter
\newcommand{\newtarget}[2]{\Hy@raisedlink{\hypertarget{#1}{}}#2}
\makeatother
\newcommand{\linktoproof}[1]{{\normalfont[{\hyperlink{proof:#1}{$\downarrow$}}]}}
\newcommand{\linkofproof}[1]{%
    \newtarget{proof:#1}\textbf{Proof of \cref{#1}}%
}

\newcommand*\circledaux[1]{%
    \tikz[baseline=(char.base)]{%
        \node[shape=circle,draw,inner sep=0.8pt] (char) {#1};%
    }%
}
\NewDocumentCommand{\circled}{m o}{%
    \IfNoValueTF{#2}{\circledaux{#1}}{\stackrel{\circledaux{#1}}{#2}}%
}

\usepackage{hyperref}
\usepackage{booktabs}
\usepackage{tabularx}
\usepackage{array}
\usepackage{makecell}
\usepackage{threeparttable}
\usepackage{subcaption}

\newcolumntype{L}[1]{%
    >{\raggedright\arraybackslash}m{#1}%
}
\newcolumntype{C}[1]{%
    >{\centering\arraybackslash}m{#1}%
}
\newcommand{\pnorm}[1]{\left\lVert #1\right\rVert_p}
\newcommand{\dnorm}[1]{\left\lVert #1\right\rVert_{p^*}}
\newcommand{\eps}{\varepsilon}
\newcommand{\Prox}{\mathcal{P}}

\begin{document}

\runningtitle{Optimal Non-Euclidean Gradient-Norm Minimization}
\runningauthor{Pelleriti et al.}

\twocolumn[

\aistatstitle{Optimal Gradient-Norm Minimization in Non-Euclidean H\"older-Smooth Convex Optimization}

\aistatsauthor{
Nico Pelleriti\textsuperscript{1,2}
\
Maryam Shiran
\
David Mart\'inez-Rubio\textsuperscript{3}
\
Max Zimmer\textsuperscript{1,2}
\
Sebastian Pokutta\textsuperscript{1,2}
}

\aistatsaddress{
\textsuperscript{1}Zuse Institute Berlin, Germany
\quad
\textsuperscript{2}Technical University of Berlin, Germany
\\
\textsuperscript{3}IMDEA Software Institute, Madrid, Spain\\
}
]

\blfootnote{A preliminary version of this work was submitted to ICML 2026 and NeurIPS 2026, obtaining optimal rates for the smooth problem and improved rates for the H\"older-smooth case. The current version contains a substantial improvement by obtaining near-optimal complexity for the latter problem.}

\begin{abstract}
Minimizing gradients of a convex function is an important problem across optimization and learning tasks. The gradient provides a directly computable certificate of approximate stationarity, and its minimization usually implies stronger results than those for minimization of function values.
In this work, we study gradient-norm minimization for convex functions that are
$(L,\kappa)$-H\"older smooth with respect to the $\ell_p$-norms,
$p \geq 1$. We develop algorithms that achieve near-optimal gradient-oracle complexity for this problem. In the smooth case, our results resolve the previously open setting $p>2$. For H\"older-smooth objectives,
we close the complexity gap throughout the full $p$-range, including to the best of our knowledge,
a gap in the Euclidean case.
We provide two families of algorithms: the first one comes with a simple iteration and generalizes a phenomenon known as mirror duality, exploiting dual behaviours of algorithms with errors and inexact computations. The second makes use of accumulating regularizers centered at different approximate solutions, which we sequentially minimize in order to provide our near-optimal rates.
\end{abstract}

\section{Introduction}

Minimizing a lower bounded convex function $f$ is a central task in continuous optimization and underlies numerous learning problems.
A typical performance measure, when a minimizer $x^\ast$ exists, is the function value gap $f(x)-f(x^\ast)$. However, this quantity is typically \emph{not directly checkable} because it depends on the unknown optimum value $f(x^\ast)$.
In contrast, the gradient norm $\|\nabla f(x)\|_*$ is directly computable and provides an explicit certificate of approximate stationarity.
This property makes \emph{gradient-norm minimization} a natural objective when one needs a verifiable stopping criterion.

\begin{table*}[t]
\centering
\scriptsize
\setlength{\tabcolsep}{2pt}
\renewcommand{\arraystretch}{1.65}

\begin{tabularx}{\linewidth}{
@{}
>{\raggedright\arraybackslash}p{0.17\linewidth}
>{\centering\arraybackslash}X
>{\centering\arraybackslash}X
>{\centering\arraybackslash}X
@{}
}
\toprule
\textbf{Setting}
&
\textbf{Lower Bound}
&
\textbf{Our Upper Bounds}
&
\textbf{Previous Upper Bound}
\\
\midrule

$(L,\kappa)$-H\"older-smooth,\par
$1\leq p\leq2$,\par
$1<\kappa\leq2$
&
$\displaystyle
\widetilde{\Omega}_{p,\kappa}\!\left(
\left(
\frac{LR^{\kappa-1}}{\epsilon}
\right)^{
\frac{2}{3\kappa-2}
}
\right)$
\par\smallskip
{\scriptsize \hyperlink{cite.diakonikolas2024complementary}{DG24}, Corollary~2}
&
$\displaystyle
\widetilde{O}_{p,\kappa}\!\left(
\left(
\frac{
LR^{\kappa-1}
}{
\epsilon
}
\right)^{
\frac{2}{3\kappa-2}
}
\right)$
\par\smallskip
{\scriptsize
    \cref{thm:accumulative-power}}
&
$\displaystyle
\widetilde O_{p,\kappa}\!\left(
\left(
\frac{
LR^{\kappa-1}
}{
\epsilon
}
\right)^{
\frac{\kappa}
{(\kappa-1)(3\kappa-2)}
}
\right)$
\par\smallskip
{\scriptsize (\hyperlink{cite.diakonikolas2024complementary}{DG24}, Thm.~3), (\hyperlink{cite.kim2023mirror}{KJC+23}, Cor.~3)}
\\

\midrule

$(L,\kappa)$-H\"older-smooth,\par
$2<p<\infty$,\par
$1<\kappa\leq2$
&
$\displaystyle
\widetilde{\Omega}_{p,\kappa}\!\left(
\left(
\frac{LR^{\kappa-1}}{\epsilon}
\right)^{
\frac{p}
{(p+1)\kappa- p}
}
\right)$
\par\smallskip
{\scriptsize \hyperlink{cite.diakonikolas2024complementary}{DG24}, Corollary~2}
&
$\displaystyle
\widetilde{O}_{p,\kappa}\!\left(
\left(
\frac{
LR^{\kappa-1}
}{
\epsilon
}
\right)^{
\frac{p}
{( p+1)\kappa- p}
}
\right)$
\par\smallskip
{\scriptsize
\cref{thm:mirror_final_rate}$^\ast$, \cref{thm:accumulative-power}}
&
$\displaystyle
\widetilde O_{p,\kappa}\!\left(
\left(
\frac{
LR^{\kappa-1}
}{
\epsilon
}
\right)^{
\frac{
\kappa(p-1)
}{
(\kappa-1)((p+1)\kappa-p)
}
}
\right)$
\par\smallskip
{\scriptsize \hyperlink{cite.diakonikolas2024complementary}{DG24}, Theorem~3}
\\

\bottomrule
\end{tabularx}

\vspace{0.4em}

\caption{
Gradient complexity for $\norm{\cdot}_{p^\ast}$-gradient-minimization of $(L,\kappa)$-H\"older-smooth
convex functions initialized at a distance $R$ from a minimizer. The smooth case corresponds to $\kappa=2$. For \cref{thm:mirror_final_rate}$^\ast$, our rate above only applies to this case, while \cref{thm:accumulative-power} applies generally.
Our upper bounds only contain log factors for $p=1$.
We wrote the upper bounds in \citep[Theorem~3]{diakonikolas2024complementary} with a minor correction to their reported rate. The rate in \citep[Corollary~3]{kim2023mirror} only contains log factors for $p=1$.}
\label{tab:main_results}

\end{table*}

Gradient-based certificates are also the right criteria to target for some problems beyond minimization.
For example, saddle-point problems and variational inequalities are often assessed via stationarity or residual measures rather than primal function suboptimality \citep{abernethy2021lastiterate,diakonikolas2020halpern,yoon2021accelerated,gorbunov2022extragradient}.
In some cases of unconstrained minimization, the gradient norm and function value criteria are related, both capturing approximate optimality in a quantitative way. An important case occurs when $f$ is $L$-smooth with respect to a norm $\|\cdot\|$. In that case, for all $x$,
$
\|\nabla f(x)\|_*^2 \le 2L\bigl(f(x)-f(x^\ast)\bigr),
$
so a small function gap implies a small gradient norm.
Conversely, if $f$ has additional curvature (e.g., $\mu$-strong convexity), then a small gradient norm implies near-optimal function value:
$
f(x)-f(x^\ast) \le \frac{1}{2\mu}\|\nabla f(x)\|_*^2.
$

Despite this close relationship, optimal methods for convex smooth minimization do not automatically yield optimal guarantees for gradient minimization.
Even in the classical Euclidean smooth setting, specialized arguments are used to obtain a sharp complexity for finding $\|\nabla f(x)\|_2 \le \varepsilon$ \citep{nesterov2012make}.
The situation becomes more delicate in \emph{non-Euclidean} geometry: when smoothness is measured in a $p$-norm, the natural certificate becomes the gradient's dual norm $\|\nabla f(x)\|_{p^\ast}$. Prior techniques achieve optimal algorithms for this task in the case $p\in[1,2]$ but leave a gap for $p>2$ highlighted by \citet{diakonikolas2024complementary}. Besides, under the more general Hölder smooth assumption, the latter paper highlights a gap between their lower bounds and known upper bounds in all regimes, even for Euclidean norms. The gap in the Euclidean case was also highlighted by \citep{dvurechensky2024nearoptimal}.

Another motivation to look into gradient-minimization algorithms is that they appear to be stronger than those for function minimization in some cases, since the simple reduction using convexity and H\"older's inequality
$
    f(x)-f(x^\ast) \leq \innp{\nabla f(x), x-x^\ast} \leq \norm{\nabla f(x)}_\ast\norm{x-x^\ast},
$
allows for achieving optimal rates for function minimization from optimal rates from gradient minimization in the cases where optimal rates have been achieved in both problem classes in bounded domains \citep{diakonikolas2024complementary}. This argument also allows one to transfer lower bounds developed for function minimization to gradient minimization.

Beyond stopping criteria, gradient-norm minimization in non-Euclidean norms underlies several machine-learning applications catalogued by \citet{diakonikolas2024complementary}: controlling the maximum residual in linear systems, sparsity in $\ell_p$-regression, and risk minimization with near-minimal predictor norm. The entropy-regularized Optimal Transport dual is $\log d$-smooth with respect to $\norm{\cdot}_1$ which falls into the  regime that we treat in this paper.

In this work, we close the gap in the smooth case for $p>2$ and obtain near-optimal gradient-norm
complexity for H\"older-smooth objectives throughout
$p\in[1,\infty)$. We develop two complementary approaches.
Our first approach builds on mirror duality \citep{kim2023timereversed}, which turns an accelerated primal method equipped with a uniformly convex regularizer $\psi$ into a time-reversed dual method whose objective controls a gradient certificate. We first use the primal method as a warm start to reduce function suboptimality, and then initialize the mirror-dual method from its output to reduce the gradient norm. The resulting dual iterate controls $\psi^\ast(\nabla f(\cdot))$, which in turn bounds $\|\nabla f(\cdot)\|_{p^\ast}$. To handle non-strongly convex regularizers in the $p>2$ geometry, we obtain a generalization of mirror duality using inexactly smooth or inexactly strongly convex functions \citep{devolder2014first} which we then can apply to the gradient minimization problem, enabling accelerated analyses by controlling the resulting inexactness terms. This algorithm has a cheap per-iteration complexity but it only matches known lower bounds for the smooth case, whereas we only obtained suboptimal rates for the general Hölder-smooth case.

Our second approach designs an algorithm by adding accumulative regularization to the function, that is increasing over time. We solve a sequence these increasingly regularized objectives, adding shifted power regularizers centered at approximate solutions from earlier stages. Uniform convexity converts the function-value accuracy at each stage into bounds for their iterates, while the optimality condition of the final accumulated objective converts these bounds into a bound on the gradient of the original objective. Combining this schedule with a generalized accelerated method for the regularized subproblems yields the optimal H\"older-smooth exponent in general $\ell_p$ geometry.

\paragraph{Contributions.} Our contributions can be summarized as follows:
\begin{enumerate}[leftmargin=*,itemsep=2pt]
    \item \textbf{Optimal gradient-norm rates for $p>2$ in the smooth case.}
    We give accelerated first-order methods for minimizing $\|\nabla f(x)\|_{p^\ast}$ when $f$ is convex and $L$-smooth with respect to $\|\cdot\|_p$ for general $p>2$.
        Our complexity matches the known lower bound up to logarithmic factors, resolving the open regime identified by \citet{diakonikolas2024complementary}. Both of our approaches achieve this rate although the one in \cref{thm:mirror_final_rate} has a simpler per-iteration subproblem.

    \item \textbf{H\"older-smooth objectives via inexact smoothness.}
        Our approach in \cref{thm:accumulative-power} achieves the optimal H\"older-smooth complexity exponent throughout $p \in [1, \infty)$, solving the remaining open questions in  \citet{diakonikolas2024complementary}, where a gap existed even in the Euclidean case. Our guarantees interpolate smoothly between the smooth and less-smooth settings.
\end{enumerate}

\section{Related Work}

\citet{nesterov2012make} argued that optimizing a strongly-convex regularized objective with accelerated function-minimization methods results in a near-optimal rate for the problem of reducing $\norm{\nabla f(x)}_2$ for convex smooth problems with respect to the Euclidean norm. \citet{diakonikolas2024complementary} developed an accelerated algorithm for composite problems $f+\psi$, where $f$ is smooth and the sum is strongly convex with respect to non-Euclidean norms. Using their framework, they generalize the technique in \citet{nesterov2012make} for $p$-norms, where $p\in [1,\infty]$, obtaining near-optimal algorithms for the case $p\in[1,2]$, leaving the case $p > 2$ as an open question, which we resolve.

Later, \citet{kim2023convergence,kim2023timereversed} developed continuous-time frameworks for analyzing models of first-order methods in the form of ordinary differential equations, based on the performance estimation problems (PEP) of \citet{drori2014performance}, discovering that a specific rate for a function-minimization algorithm in their setting would translate to the same rate for gradient-norm minimization for a specific dual method. Both were developed for the Euclidean setting. \citet{kim2023timereversed} also provide equivalences between algorithms in discrete times, and a follow-up work \citep{kim2023mirror} generalizes the framework to non-Euclidean norms and applies it to the minimization of $\norm{\nabla f(\cdot)}_{p^\ast}$ for convex smooth functions with respect to $p$-norms, $p \in [1, 2]$. The result is an algorithm that is optimal up to logarithmic factors, improving over the prior result on this problem by \citet{diakonikolas2024complementary}. Their application to convex objectives smooth with respect to $\ell_p$ norms gives optimal gradient-norm complexity up to logarithmic factors for $p\in[1,2]$, improving over the earlier regularization bound in that regime. Our mirror-dual technique generalizes this phenomenon to obtain dimension-independent results for the $p>2$ setting.

\citet{devolder2014first} developed methods for optimization with an inexact oracle and showed how H\"older-smooth objectives can be treated through an inexact smoothness model. We take inspiration from this viewpoint on both sides of the mirror-duality construction: H\"older-smooth objectives are represented as inexactly smooth functions, while uniformly convex regularizers are treated through the corresponding inexact strong-convexity relation. We have to make a careful tuning of the inexact parameters in our algorithms to achieve optimality.

Another family of regularization algorithms solve a sequence of increasingly regularized problems rather than choosing a single regularization parameter. In stochastic Euclidean optimization, \citet{allenzhu2018make} introduced a recursive quadratic-regularization scheme whose successive regularizers are centered at approximate solutions of the preceding subproblems. \citet{foster2019complexity} extended this approach and established nearly matching upper and lower oracle-complexity bounds for stochastic convex optimization. In deterministic smooth optimization, \citet{lan2023optimal} developed accumulative-regularization methods attaining optimal first-order complexity for gradient minimization, together with parameter-free and more general variants. More recently, \citet{ji2025high} extended the framework to high-order methods, showing how fast function-value convergence of an inner method can be converted into gradient- or subgradient-norm guarantees. They require the order of the method plus the exponent of the Hölder-smoothness condition to be $\geq 2$, thus not including the Hölder-smooth case for first-order methods.

\section{Preliminaries and Groundwork}\label{sec:preliminaries}

We denote by $\|\cdot\|$ a norm on $\R^d$ and by $\|\cdot\|_*$ its
dual, where $\|z\|_* \defi \sup_{\|x\|\leq 1}\innp{z,x}$. We use $\log_+(\cdot) = \max\{0, \log_2(\cdot)\}$. For
differentiable and convex $f$, its Bregman divergence is
$D_f(x,y)\defi f(x)-f(y)-\innp{\nabla f(y),x-y} \geq 0$, and its Fenchel
conjugate is $f^\ast(u)\defi\sup_x\{\innp{u,x}-f(x)\}$.

We write $T$ for the total number of iterations. For $p\in[1,\infty]$,
we let $p^\ast\defi(1-1/p)^{-1}$ be the Young conjugate, with the
limiting conventions $p^\ast=\infty$ for $p=1$ and $p^\ast=1$ for
$p=\infty$. Then $\|\cdot\|_{p^\ast}$ is dual to $\|\cdot\|_p$.
When regularity is measured with respect to a $p$-norm,
$O_{p,\kappa}(\cdot)$ and $\Omega_{p,\kappa}(\cdot)$ may suppress constants
depending only on $p$ and the H\"older exponent $\kappa$. The notation
$\widetilde{O}_{p,\kappa}(\cdot)$ additionally suppresses logarithmic
factors in the problem parameters. 

\begin{definition}[Uniform convexity and H\"older smoothness]
\label{def:uniform-convexity-holder-smoothness}
Let \(f:\R^d\to\R\) be convex and differentiable. We say that \(f\) is
    \((\mu,q)\)-uniformly convex (resp. \((L,\kappa)\)-H\"older smooth) with respect to a norm \(\|\cdot\|\) if
    \circled{1} holds (resp. if \circled{2} holds)  for all $x,y\in\R^d$:
\[
    \frac{\mu}{q}\|x-y\|^q
    \circled{1}[\leq]
    D_f(x,y)
    \circled{2}[\leq]
    \frac{L}{\kappa}\|x-y\|^\kappa,
\]
where \(\mu>0\), \(q\geq2\), \(L>0\), and
\(\kappa\in(1,2]\).
When \(\kappa=2\), we simply say that \(f\) is \(L\)-smooth.
No nonconstant function satisfies the uniform-convexity inequality with
\(q<2\), cf. \citep{nesterov04introductory}.
\end{definition}

Uniform convexity generalizes strong convexity, which is the case $q=2$.
For $q > 2$, uniformly convex functions are flatter around the minimum than strongly convex functions. This property is crucial for analyzing algorithms in non-Euclidean spaces where strongly convex regularizers in bounded regions only exist if either the range of the function or $1/\mu$ depends polynomially on the dimension, which would lead to rates with $\poly(d)$ dependence. This is the case for $p$-norms, where $p > 2$ and $p$ is not trivially close to $2$, cf. \citet[Example 5.1]{daspremont2018optimal}.

\subsection{Regularizers and Accumulative Problems}
\label{subsec:powerregshifted}
Let
\(q\coloneqq\max\{2,p\}\) and
\(r_q(u)\coloneqq \frac{1}{q}\|u\|_p^q\), for $p \in (1, \infty)$. For $p \in \{1, \infty\}$, we will use a proxy norm, cf. \cref{rem:endpoint-norms}. Our algorithm in \cref{sec:accumulative_regularization} will make use of standard properties of \(r_q\), stated in the following fact.

\begin{fact}[Power-regularizer geometry]
\label{fact:power_geometry}\linktoproof{fact:power_geometry}
Let \(p\in(1,\infty)\) and \(q=\max\{2,p\}\). Then $r_q$ is
$(\mu_{p,q},q)$-uniformly convex with respect to $\|\cdot\|_p$, where
\[
    \mu_{p,q}\coloneqq
    \begin{cases}
        p-1, & 1<p\leq2,\\
        2^{2-p}, & p\geq2.
    \end{cases}
\]
Moreover, for every $u\in\mathbb{R}^d$,
\[
    \|\nabla r_q(u)\|_{p^\ast}=\|u\|_p^{q-1}.
\]
\end{fact}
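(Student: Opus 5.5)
The plan is to treat the two regimes separately and to handle the gradient-norm identity directly.

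\emph{Gradient identity.} For $u\neq 0$ the coordinates of the gradient are
\[
\nabla r_q(u)_i=\|u\|_p^{q-p}\,|u_i|^{p-1}\operatorname{sign}(u_i),
\]
since $\nabla \tfrac1p\|u\|_p^p$ has entries $|u_i|^{p-1}\operatorname{sign}(u_i)$ and the chain rule applied to $\tfrac1q(\|u\|_p^p)^{q/p}$ contributes the factor $\|u\|_p^{q-p}$. Taking the $p^\ast$-norm, and using $(p-1)p^\ast=p$, we get
\[
\big\|(|u_i|^{p-1})_i\big\|_{p^\ast}=\Big(\sum_i |u_i|^p\Big)^{(p-1)/p}=\|u\|_p^{p-1}.
\]
Hence $\|\nabla r_q(u)\|_{p^\ast}=\|u\|_p^{q-1}$. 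At $u=0$ both sides vanish because $q\geq 2>1$. The function $r_q$ is $C^1$, which is needed for the uniform-convexity statement to make sense as in \cref{def:uniform-convexity-holder-smoothness}.

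\emph{Case $1<p\le 2$, so $q=2$.} Here the claim is the classical fact that $\tfrac12\|\cdot\|_p^2$ is $(p-1)$-strongly convex with respect to $\|\cdot\|_p$. I would cite this from the literature (Ball--Carlen--Lieb, or Juditsky--Nemirovski). The standard derivation goes through the dual: $\tfrac12\|\cdot\|_{p^\ast}^2$ is $(p^\ast-1)$-smooth with respect to $\|\cdot\|_{p^\ast}$ for $p^\ast\ge2$. One shows this via the second-order bound
\[
\langle \nabla^2\tfrac12\|z\|_{s}^2\,h,h\rangle\le (s-1)\|h\|_s^2,\qquad s=p^\ast,
\]
which follows from Hölder's inequality applied to $\sum |z_i|^{s-2}h_i^2$. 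Strong convexity of $r_2$ with constant $1/(p^\ast-1)=p-1$ then follows by conjugate duality. This yields $D_{r_2}(x,y)\ge \tfrac{p-1}{2}\|x-y\|_p^2$.

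\emph{Case $p\ge 2$, so $q=p$.} Now $r_p(u)=\tfrac1p\sum_i|u_i|^p$ is separable, so it suffices to prove the scalar inequality
\[
\tfrac1p|a|^p-\tfrac1p|b|^p-|b|^{p-2}b(a-b)\ \ge\ \tfrac{2^{2-p}}{p}|a-b|^p
\]
and then sum over coordinates, because $\sum_i|x_i-y_i|^p=\|x-y\|_p^p$. For the scalar bound I would write the Bregman divergence as an integral,
\[
D(a,b)=\int_0^1\big(\phi'(b+t(a-b))-\phi'(b)\big)(a-b)\,dt,\qquad \phi'(s)=|s|^{p-2}s.
\]
The needed ingredient is the monotonicity estimate
\[
(\phi'(x)-\phi'(y))(x-y)\ge 2^{2-p}|x-y|^p,
\]
a known inequality for $p\ge2$. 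It can be obtained by reducing to $y=-x$ or by homogeneity in a single variable. Substituting $x=b+t(a-b)$ and $y=b$ gives the integrand $\ge 2^{2-p}t^{p-1}|a-b|^p$, and integrating in $t$ gives the factor $1/p$.

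\emph{Main obstacle.} The delicate step is the scalar inequality $(|x|^{p-2}x-|y|^{p-2}y)(x-y)\ge 2^{2-p}|x-y|^p$ with the sharp constant. My first attempt would be a homogeneity normalization, $x-y=1$ with $x=\tfrac12+s$, followed by minimizing over $s$. By symmetry the minimum should sit at $s=0$, and this is exactly the point that gives $2\cdot 2^{1-p}=2^{2-p}$. Convexity in $s$ of $|{\tfrac12}+s|^{p-1}\operatorname{sign}(\cdot)$-type expressions would then need to be checked.
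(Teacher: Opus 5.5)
Your proposal is correct. The gradient identity and the case $p\ge 2$ follow the paper's proof. The paper also writes $\nabla r_q(u)=\|u\|_p^{q-p}J_p(u)$ with $\|J_p(u)\|_{p^\ast}=\|u\|_p^{p-1}$. For $p\ge2$ it integrates the coordinatewise estimate $(|a|^{p-2}a-|b|^{p-2}b)(a-b)\ge 2^{2-p}|a-b|^p$ along the segment to obtain the factor $2^{2-p}/p$.

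You differ for $1<p\le 2$. The paper cites the strong monotonicity of the normalized duality map, $\innp{\nabla r_2(x)-\nabla r_2(y),x-y}\ge(p-1)\|x-y\|_p^2$, and integrates along the segment exactly as in the other regime. You instead prove $(p^\ast-1)$-smoothness of $\tfrac12\|\cdot\|_{p^\ast}^2$ from its Hessian and then pass to the conjugate. In the Hessian, the rank-one term has coefficient $(2-s)\|z\|_s^{2-2s}\le 0$ and can be dropped, and H\"older handles the diagonal term. Your route is more self-contained. It costs you the smoothness/strong-convexity conjugacy for general norms and a small limiting argument at $z=0$, where the Hessian does not exist for $s>2$. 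The paper's route treats both regimes with one template (a monotonicity inequality plus integration along the segment), but imports the key inequality without proof.

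One correction to your ``main obstacle'' paragraph, which covers a step the paper simply cites: convexity is the wrong property to check. Set $\phi'(t)=|t|^{p-2}t$ and $h(s)=\phi'(s+\tfrac12)-\phi'(s-\tfrac12)$. Then $h$ is even, but for $2<p<3$ it is not convex: $h''(s)=(p-1)(p-2)\bigl((s+\tfrac12)^{p-3}-(s-\tfrac12)^{p-3}\bigr)<0$ for $s>\tfrac12$. What works for every $p\ge 2$ is monotonicity. For $s\ge 0$ we have $|s+\tfrac12|\ge|s-\tfrac12|$ and $t\mapsto t^{p-2}$ is nondecreasing, so
$h'(s)=(p-1)\bigl(|s+\tfrac12|^{p-2}-|s-\tfrac12|^{p-2}\bigr)\ge 0$.
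Hence $\min_s h=h(0)=2^{2-p}$. Homogeneity, $\phi'(\ell t)=\ell^{p-1}\phi'(t)$ for $\ell>0$, then gives $(\phi'(x)-\phi'(y))(x-y)=\ell^{p}h(s)\ge 2^{2-p}\ell^p$ with $\ell=x-y>0$ and $s=(x+y)/(2\ell)$. With this fix your argument for the scalar inequality is complete.
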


Given \(z\in\mathbb{R}^d\), centers
\(c_1,\ldots,c_m\in\mathbb{R}^d\), and weights
\(\alpha_1,\ldots,\alpha_m\geq0\) with
\(\sum_{i=1}^m\alpha_i>0\), 
our algorithm in \cref{sec:accumulative_regularization} will compute iterations with the following structure
\begin{equation}
\label{eq:shifted_power_call}
    \operatorname*{arg\,min}_{y\in\mathbb{R}^d}
    \left\{
        \langle z,y\rangle
        +\sum_{i=1}^m\alpha_i r_q(y-c_i)
    \right\}.
\end{equation}
By \cref{fact:power_geometry}, the objective is coercive and strictly
convex, so the minimizer is well defined and unique. We call an exact
evaluation of \cref{eq:shifted_power_call} a \emph{shifted-power call}
and all of our uses of it will not require gradient evaluations of our objective function $f$. 

The oracle has a simple structure in several regimes. If
\(p=q=2\), letting
\(B\coloneqq\sum_{i=1}^m\alpha_i\) and
\(\bar c\coloneqq B^{-1}\sum_{i=1}^m\alpha_i c_i\), completing the
square gives that \cref{eq:shifted_power_call} corresponds to the gradient descent step:
\[
\bar c-\frac{z}{B}.
\]
If \(p>2\), then \(q=p\) and the problem separates across coordinates:
the \(j\)-th coordinate of the minimizer is the unique solution
\(t\in\mathbb{R}\) of
\begin{equation}
\label{eq:shifted-power-coordinate}
    z_j+\sum_{i=1}^m
    \alpha_i|t-c_{i,j}|^{p-2}(t-c_{i,j})=0.
\end{equation}
Indeed, the left-hand side is continuous and strictly increasing from
\(-\infty\) to \(+\infty\), so each shifted-power call reduces to \(d\)
independent one-dimensional root-finding problems.

For \(1<p<2\), \(q=2\) and
\(r_2(y-c)=\frac12\|y-c\|_p^2\) is generally not separable across
coordinates.

\subsection{Inexact Smoothness}

\begin{definition}[Inexact smoothness and strong convexity]
\label{def:inexact_holder_smooth}
For $L,\mu>0$ and $\delta\geq0$, a differentiable function $f$ is
$\delta$-inexact $\mu$-strongly convex (respectively,
    $\delta$-inexact $L$-smooth) with respect to $\|\cdot\|$ if $\circled{1}$ holds (resp. $\circled{2}$) for all
$x,y$,
\[
    \frac{\mu}{2}\|x-y\|^2-\delta
   \circled{1}[\leq]
    D_f(x,y)
    \circled{2}[\leq]
    \frac{L}{2}\|x-y\|^2+\delta.
\]
\end{definition}

H\"older-smooth functions with $\kappa\in(1,2)$ can be represented as
inexactly smooth functions. Similarly, uniformly convex functions can
be represented as inexactly strongly convex functions. These reductions,
the former introduced by \citet{devolder2014first}, allow us to adapt
proof techniques developed for the standard smooth and strongly convex
setting.

\begin{lemma}[Reduction to inexact smoothness]
\label{lemma:holder_inexact}\linktoproof{lemma:holder_inexact}
If $f$ is $(L,\kappa)$-H\"older smooth with $\kappa\in(1,2)$, then for
any $\gamma>0$, $f$ is $\delta$-inexact $\widetilde L$-smooth with
\[
    \widetilde L=\frac{L}{\gamma^{2/\kappa}},
    \qquad
    \delta
    =
    L\gamma^{2/(2-\kappa)}
    \frac{2-\kappa}{2\kappa}.
\]
\end{lemma}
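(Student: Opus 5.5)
The plan is to reduce the claim to a scalar inequality in $t \defi \|x-y\|$ and then prove that inequality with a weighted Young inequality. By \cref{def:uniform-convexity-holder-smoothness}, $(L,\kappa)$-H\"older smoothness gives $D_f(x,y)\leq \frac{L}{\kappa}t^\kappa$. By \cref{def:inexact_holder_smooth}, it therefore suffices to show that for every $t\geq 0$,
\[
\frac{L}{\kappa}t^\kappa \leq \frac{\widetilde L}{2}t^2+\delta ,
\]
with $\widetilde L$ and $\delta$ as in the statement. Convexity of $f$ is not needed for this upper bound.

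To prove the scalar inequality, I would use Young's inequality $uv\leq \frac{u^a}{a}+\frac{v^b}{b}$ for $u,v\geq 0$. I take the conjugate exponents $a=2/\kappa$ and $b=2/(2-\kappa)$; both lie in $(1,\infty)$ because $\kappa\in(1,2)$, and $1/a+1/b=\kappa/2+(2-\kappa)/2=1$. I split $t^\kappa=u\cdot v$ with $u=\gamma^{-1}t^\kappa$ and $v=\gamma$. Then
\[
u^a/a=\tfrac{\kappa}{2}\gamma^{-2/\kappa}t^2
\qquad\text{and}\qquad
v^b/b=\tfrac{2-\kappa}{2}\gamma^{2/(2-\kappa)} .
\]
Multiplying the resulting bound $t^\kappa\leq \frac{\kappa}{2}\gamma^{-2/\kappa}t^2+\frac{2-\kappa}{2}\gamma^{2/(2-\kappa)}$ by $L/\kappa$ gives
\[
\frac{L}{\kappa}t^\kappa\leq \frac{L\gamma^{-2/\kappa}}{2}t^2+L\gamma^{2/(2-\kappa)}\frac{2-\kappa}{2\kappa}.
\]
The right-hand side is exactly $\frac{\widetilde L}{2}t^2+\delta$.

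The only real step is choosing the split and the exponents so that the constants match the stated $\widetilde L$ and $\delta$ exactly. The split $u=\gamma^{-1}t^\kappa$, $v=\gamma$ with $a=2/\kappa$ is the natural choice, and the computation above confirms it reproduces both constants. Everything else is routine bookkeeping, and the case $t=0$ is trivial.
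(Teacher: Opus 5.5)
Your proof is correct and is the same argument as the paper's: bound $D_f$ by $\frac{L}{\kappa}\|x-y\|^\kappa$, then apply Young's inequality with exponents $2/\kappa$ and $2/(2-\kappa)$. The paper compresses this into one line, and your explicit split $u=\gamma^{-1}t^\kappa$, $v=\gamma$ reproduces its constants $\widetilde L$ and $\delta$ exactly.
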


Key to our mirror-dual analysis is the duality between uniform convexity
and H\"older smoothness. See, for example, \citet{zalinescu2002convex}. We provide a proof for completeness.

\begin{fact}[Uniform convexity and conjugate H\"older smoothness]
\label{prop:conjugate_holder}\linktoproof{prop:conjugate_holder}
The Fenchel conjugate $\psi^\ast$ of a differentiable
$(\mu,p)$-uniformly convex function $\psi$, with $p\geq2$, is
$(M,p^\ast)$-H\"older smooth with
\[
    M=\left(\frac{p}{2\mu}\right)^{p^\ast-1},
    \qquad
    p^\ast=\frac{p}{p-1}.
\]
\end{fact}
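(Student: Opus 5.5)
The plan is to go through the gradient of $\psi^\ast$: first show that $\nabla\psi^\ast$ is H\"older continuous with exponent $p^\ast-1=\frac{1}{p-1}$, then integrate along segments to get the Bregman bound $D_{\psi^\ast}(u,v)\le \frac{M}{p^\ast}\|u-v\|_\ast^{p^\ast}$. Smoothness of $\psi^\ast$ is measured with respect to the dual norm $\|\cdot\|_\ast$, as is implicit in the statement.

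\textbf{Step 0 (well-posedness).} By uniform convexity, $\psi(x)\ge \psi(0)+\innp{\nabla\psi(0),x}+\frac{\mu}{p}\|x\|^p$. Since $p\ge 2>1$, $\psi$ grows superlinearly. Hence for every $u$ the supremum defining $\psi^\ast(u)$ is finite and attained. The function $\psi$ is strictly convex, since the lower bound in \cref{def:uniform-convexity-holder-smoothness} is positive for $x\neq y$. So the maximizer $x_u$ is unique, and the first-order condition gives $u=\nabla\psi(x_u)$. By Danskin's theorem (or the standard fact that the conjugate of an essentially strictly convex function is differentiable), $\psi^\ast$ is differentiable with $\nabla\psi^\ast(u)=x_u$. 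I expect this to be the only step needing some care. The estimates below are elementary once we know that $\nabla\psi^\ast$ exists everywhere and inverts $\nabla\psi$.

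\textbf{Step 1 (strong monotonicity).} Summing the uniform convexity inequality for $D_\psi(x,y)$ and $D_\psi(y,x)$ gives
\[
\innp{\nabla\psi(x)-\nabla\psi(y),x-y}=D_\psi(x,y)+D_\psi(y,x)\ge \tfrac{2\mu}{p}\|x-y\|^p .
\]
Set $x=\nabla\psi^\ast(u)$ and $y=\nabla\psi^\ast(v)$, so that $\nabla\psi(x)=u$ and $\nabla\psi(y)=v$. Bounding the left-hand side by the dual-norm inequality gives $\frac{2\mu}{p}\|x-y\|^p\le\|u-v\|_\ast\|x-y\|$. Dividing by $\|x-y\|$ (the case $x=y$ is trivial) yields
\[
\|\nabla\psi^\ast(u)-\nabla\psi^\ast(v)\|\le \Big(\tfrac{p}{2\mu}\Big)^{\frac1{p-1}}\|u-v\|_\ast^{\frac1{p-1}}=M\|u-v\|_\ast^{p^\ast-1},
\]
using $\frac1{p-1}=p^\ast-1$.

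\textbf{Step 2 (integration).} Write
\[
D_{\psi^\ast}(u,v)=\int_0^1\innp{\nabla\psi^\ast(v+t(u-v))-\nabla\psi^\ast(v),\,u-v}\,dt .
\]
Apply the H\"older bound from Step 1 to each integrand, together with $|\innp{a,b}|\le\|a\|\,\|b\|_\ast$. This gives
\[
D_{\psi^\ast}(u,v)\le M\|u-v\|_\ast^{p^\ast}\int_0^1 t^{p^\ast-1}\,dt=\frac{M}{p^\ast}\|u-v\|_\ast^{p^\ast}.
\]
Convexity of $\psi^\ast$ is automatic for a conjugate, and $p^\ast\in(1,2]$ because $p\ge2$. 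So $\psi^\ast$ is $(M,p^\ast)$-H\"older smooth with respect to $\|\cdot\|_\ast$, in the sense of \cref{def:uniform-convexity-holder-smoothness}.
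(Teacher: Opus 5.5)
Your proposal is correct and follows essentially the same route as the paper's proof. Both sum the two uniform-convexity inequalities to get strong monotonicity of $\nabla\psi$, identify $\nabla\psi^\ast$ with $(\nabla\psi)^{-1}$ via coercivity, and apply H\"older's inequality to obtain $(p^\ast-1)$-H\"older continuity of $\nabla\psi^\ast$ with constant $M$; both then integrate along the segment to get $D_{\psi^\ast}(u,v)\le \frac{M}{p^\ast}\|u-v\|_\ast^{p^\ast}$, and your Step 0 is only slightly more explicit than the paper about why $\psi^\ast$ is finite and differentiable everywhere.
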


\section{Inexact Mirror Duality}\label{sec:mirror_duality}
Our first technique builds on mirror duality \citep{kim2023mirror}, which relates a primal first-order method to a time-reversed dual method whose objective controls a gradient certificate. We adapt and extend this correspondence to the inexact smoothness models arising from H\"older-smooth objectives and uniformly convex regularizers.

\begin{assumption}[Convex Hölder-smooth]
\label{ass:holder-setting}
Let \(p\in[1,\infty)\) and \(1<\kappa\le 2\). The function
\(f:\mathbb{R}^d\to\mathbb{R}\) is convex and differentiable, attains
its minimum at some \(x^\ast\in\mathbb{R}^d\), and is
\((L,\kappa)\)-Hölder smooth with respect to the \(\ell_p\)-norm.
\end{assumption}

Throughout this subsection, in addition to
\cref{ass:holder-setting}, the direct finite-dimensional construction
uses \(2\le p<\infty\) and the regularizer
\(
    \psi(u)\defi\frac{1}{p}\|u\|_p^p.
\)
Its uniform convexity makes $\psi^\ast$ H\"older smooth and converts a
bound on $\psi^\ast(z)$ into a bound on $\|z\|_{p^\ast}$. Mirror duality
\citep{kim2023mirror} time-reverses the
coefficient matrices of accelerated gradient descent and swaps its two
oracle maps, $\nabla f$ and $\nabla\psi^\ast$. Denote the resulting
method by $\mathsf{MD}(\mathsf{AGD})$. Its pseudocode is given in
\cref{alg:mirror-dual-agd}. Its final dual iterate has the important
algebraic property
\(
    r_T=\nabla f(q_T),
\)
so a dual function-value guarantee is a stationarity certificate at a
known primal point.

The standard mirror-duality proof uses quadratic smoothness remainders.
Here both $f$ and $\psi^\ast$ may only be H\"older smooth. The
inexactness levels and complete potential analysis are given in
\cref{lem:appendix_mirror_phase}. Applying
\cref{lemma:holder_inexact} to the two remainders yields, for suitable
weights $A_t\asymp t^2$, the compact potential bound:
\begin{equation}\label{eq:mirror_potential_summary}
    \psi^\ast(r_T)
    \lesssim
    \frac{\Delta}{A_T}
    +T\delta_\psi
    +\delta_f\sum_{t=1}^{T-1}\frac{1}{A_t}.
\end{equation}
Here $\Delta\defi f(q_0)-f(x^\ast)$, while $\delta_\psi$ and $\delta_f$
are the tunable inexactness levels for $\psi^\ast$ and $f$, respectively.
Since
$\sum_{t\geq1}A_t^{-1}<\infty$, optimizing these levels gives
\begin{equation}\label{eq:mirror_phase_rate}
    \|\nabla f(q_T)\|_{p^\ast}
    =
    O_{p,\kappa}\!\left(
        \frac{L^{1/\kappa}
        \Delta^{(\kappa-1)/\kappa}}
        {T^{\,3/2+1/p-2/\kappa}}
    \right),
    \qquad 1<\kappa<2,
\end{equation}
whereas for $\kappa=2$ the bound is
$O_p\bigl(\sqrt{L\Delta}/T^{1/2+1/p}\bigr)$.

The dependence on the initial function gap is removed by a short primal
warm start, formalized in \cref{lem:appendix_primal_warm_start} and
implemented in \cref{alg:primal-warm-start}. Namely, run non-Euclidean accelerated gradient descent for
$T$ iterations from $x_0$, then initialize
$\mathsf{MD}(\mathsf{AGD})$ at its output and run it for another $T$
iterations. The first phase ensures
\[
    \Delta
    =
    O_{p,\kappa}\!\left(
        \frac{LR^\kappa}
        {T^{((p+1)\kappa-p)/p}}
    \right),
    \qquad
    R\geq\|x_0-x^\ast\|_p.
\]
Substitution into \cref{eq:mirror_phase_rate} gives the following
two-phase guarantee. The complete procedure appears in
\cref{alg:two-phase-mirror-dual}.

\begin{theorem}[Inexact mirror-dual rate]
\label{thm:mirror_final_rate}\linktoproof{thm:mirror_final_rate}
Suppose \cref{ass:holder-setting} holds with \(p\in[2,\infty)\). Let \(x_0\in\mathbb{R}^d\), \(\epsilon>0\), and
\(R\geq\|x_0-x^\ast\|_p\). \cref{alg:two-phase-mirror-dual} it returns a point \(x\)
satisfying \(\|\nabla f(x)\|_{p^\ast}\leq\epsilon\), if \(\kappa=2\), using
\[
    O_{ p}\!\left(
        \left(\frac{LR}{\epsilon}\right)^{ p/( p+2)}
    \right)
\]
gradient evaluations. If \(1<\kappa<2\) and
\(
    D_{ p,\kappa}
    \defi
    \kappa p+\kappa
    -\left(\frac12+\frac1\kappa\right) p
    >0,
\)
then the gradient evaluations are
\[
    O_{ p,\kappa}\!\left(
        \left(
            \frac{LR^{\kappa-1}}{\epsilon}
        \right)^{ p/D_{ p,\kappa}}
    \right).
\]
\end{theorem}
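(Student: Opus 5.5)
The plan is to compose the two phase guarantees already summarized before the statement and then invert the resulting bound for $T$. \cref{alg:two-phase-mirror-dual} runs $T$ iterations of non-Euclidean accelerated gradient descent from $x_0$, which costs $O(T)$ gradient evaluations. It then runs $T$ iterations of $\mathsf{MD}(\mathsf{AGD})$ initialized at the output $q_0$, which costs another $O(T)$ and returns $x=q_T$. The proof reduces to three steps.

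\textbf{Step 1: bound the gap after warm start.} By \cref{lem:appendix_primal_warm_start},
\[
\Delta = f(q_0)-f(x^\ast) = O_{p,\kappa}\bigl(LR^\kappa T^{-((p+1)\kappa-p)/p}\bigr).
\]
For $\kappa=2$ this reads $O_p(LR^2T^{-(p+2)/p})$.

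\textbf{Step 2: apply the mirror-phase rate and simplify.} We take \cref{eq:mirror_phase_rate} as given, as established in \cref{lem:appendix_mirror_phase} from the potential bound \cref{eq:mirror_potential_summary}. That bound is obtained by choosing $\delta_\psi$ and $\delta_f$ via \cref{lemma:holder_inexact}, applied to $f$ and to $\psi^\ast$, with \cref{prop:conjugate_holder} supplying the Hölder smoothness of $\psi^\ast$. We then convert the bound on $\psi^\ast(r_T)$ into a bound on $\|\nabla f(q_T)\|_{p^\ast}$, using that $\psi^\ast(z)=\frac{1}{p^\ast}\|z\|_{p^\ast}^{p^\ast}$.

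\emph{Smooth case.} Here
\[
\|\nabla f(q_T)\|_{p^\ast}\lesssim \sqrt{L\cdot LR^2 T^{-(p+2)/p}}\;T^{-1/2-1/p}=LR\,T^{-(p+2)/p}.
\]
Setting this $\le\epsilon$ gives $T=O_p((LR/\epsilon)^{p/(p+2)})$.

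\emph{Hölder case.} Substituting $\Delta$ gives
\[
L^{1/\kappa}\bigl(LR^\kappa\bigr)^{(\kappa-1)/\kappa}\,T^{-\frac{(\kappa-1)((p+1)\kappa-p)}{\kappa p}}\,T^{-(3/2+1/p-2/\kappa)} = LR^{\kappa-1}\,T^{-E},
\]
where $E=\frac{(\kappa-1)((p+1)\kappa-p)}{\kappa p}+\frac32+\frac1p-\frac2\kappa$.

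\textbf{Step 3: compute $E$ and invert.} Multiplying by $\kappa p$ gives
\[
E\kappa p=(\kappa-1)(\kappa p+\kappa-p)+\tfrac32\kappa p+\kappa-2p .
\]
Expanding,
\[
(\kappa-1)(\kappa p+\kappa-p)=\kappa^2p+\kappa^2-2\kappa p-\kappa+p,
\]
so the total is $\kappa^2p+\kappa^2-\tfrac12\kappa p-p$. This equals $\kappa D_{p,\kappa}$, since
\[
\kappa D_{p,\kappa}=\kappa^2p+\kappa^2-\tfrac12\kappa p-p .
\]
Hence $E=D_{p,\kappa}/p$. When $D_{p,\kappa}>0$ we have $E>0$, so $LR^{\kappa-1}T^{-E}\le\epsilon$ holds for $T\asymp (LR^{\kappa-1}/\epsilon)^{p/D_{p,\kappa}}$. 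The total gradient count is $2T$ plus a constant, which gives the claim.

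The main obstacle is Step 2, specifically the tuning of the inexactness levels underlying \cref{eq:mirror_phase_rate}. One must check that the choice of $\delta_\psi$ and $\delta_f$ is consistent with the weights $A_t\asymp t^2$, and that the step sizes depend on $\widetilde L$ and on the smoothness constant of $\psi^\ast$. Since that derivation is deferred to \cref{lem:appendix_mirror_phase}, the proof of the theorem itself reduces to the exponent bookkeeping above. The remaining care is to track that the constants depend only on $p$ and $\kappa$, and to note that the condition $D_{p,\kappa}>0$ is exactly what makes the bound decrease in $T$.
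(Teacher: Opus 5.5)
Your proposal is correct and follows the paper's proof essentially verbatim. Like the paper, you compose the warm-start bound of \cref{lem:appendix_primal_warm_start} with the mirror-dual rate of \cref{lem:appendix_mirror_phase}, show that the resulting exponent is $D_{p,\kappa}/p$ (your expansion of $E\kappa p$ is exactly the paper's identity for $\Gamma$), and invert for $T$, absorbing the factor of two from the two phases. The only detail you leave implicit, as the paper's proof also does, is that the dual phase is tuned with the warm-start upper bound $\overline\Delta\geq\Delta$; this is harmless because the dual potential bound is increasing in $\Delta$.
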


The prior knowledge required for tuning can be removed, up to logarithmic factors, by
standard doubling and geometric-search arguments over the horizon,
smoothness scale, and internal tuning parameters, using the computable certificate
$\|\nabla f(x)\|_{p^\ast}\leq\epsilon$ for termination. See, e.g.,
\citet{lan2023optimal} for related parameter-free constructions.

This inexact mirror
duality method achieves optimality for smooth functions ($\kappa=2$) and has a simpler iteration than our next method, but does not close the
Hölder-smooth complexity gap, even though it improves over previous work in several cases, see \cref{prop:rate-comparison} for a rate comparison. The method in the next section, however, is optimal
up to logarithmic factors across the range $p\in[1,\infty)$.

\section{Accumulative Power Regularization}
\label{sec:accumulative_regularization}

We now develop an accumulative-regularization scheme adapted to the
$\ell_p$ geometry. We make use of the shifted-power regularizers 
\(
r_q(x):=\frac{1}{q}\|x\|_p^q, q:=\max\{2,p\},
\)
introduced in \cref{subsec:powerregshifted}.
The endpoint cases $p=1$ and $p=\infty$ are handled through the proxy
norms in \cref{rem:endpoint-norms}.

We run an algorithm in stages. At each stage, the objective is regularized from the current approximate solution, on top of regularization set at previous stages. Then, we minimize the resulting object to a prescribed function-value accuracy using Generalized \(\mathrm{AGD}^{+}\)~\citep{diakonikolas2024complementary}. The shifted-power subproblems introduced in \cref{eq:shifted_power_call} are required. We show that an appropriate geometric choice of regularization weights and target accuracies controls the distance between successive minimizers and transfers the final function-value guarantee into a bound on \(\|\nabla f\|_{p^*}\). Finally, accounting for the complexity of each inner iteration to its required accuracy, we obtain a complexity that nearly matches the lower bound in \citet{diakonikolas2024complementary}. Our complexity does not contain any logarithmic factors except for $p=1$ and $p=\infty$, and except maybe for these two cases, we believe that the remaining logarithmic factors between the upper and the lower bounds are due to the lower bounds not being fully optimal.

Throughout the analysis in this subsection, we work
under \cref{ass:holder-setting} with \(1<p<\infty\) and set
\(q := \max\{2,p\}.\)

At an outer stage, we minimize $H_s(x) := f(x)+\Psi_s(x)$, where
$f$ is our objective and $\Psi_s(x)=\sum_{i=1}^m \alpha_i r_q(x-c_i)$ is our accumulative regularization term, for some weights $\alpha_i = \sigma_i - \sigma_{i-1}\geq0$, where
$\sigma_i > 0$ is a parameter that grows exponentially, and we let
$\lambda_s=\mu_{p,q}\sigma_s$ be the uniformly convex constant of $H_s$.

Given an initial point $\bar y$, we use the stage-centered auxiliary regularizer
$\phi_{\bar y}(x)\defi\mu_{p,q}^{-1}r_q(x-\bar y)
=(q\mu_{p,q})^{-1}\|x-\bar y\|_p^q$.
By the uniform convexity of $r_q$,
$\phi_{\bar y}(x)\geq q^{-1}\|x-\bar y\|_p^q$ and
$D_{\phi_{\bar y}}(x,z)\geq q^{-1}\|x-z\|_p^q$.
So $\phi_{\bar y}$ is an admissible auxiliary regularizer for the Generalized $\AGD$
framework of \citet[Theorem~2]{diakonikolas2024complementary}.
The step sizes $a_k$, accumulated weights $A_k$, parameters $M_k$ and
$m_0$, and inexactness schedule are chosen exactly as in that theorem.

\begin{fact}[AGD+ Inner complexity]
\label{fact:generalized-agd-inner}
    Let $x_s^\ast\in\argmin \{H_s := f+\Psi\}$ where $f$ is $(L,\kappa)$-Hölder smooth and convex and $\Psi$ is $(\mu_{p,q}, q)$-uniformly convex for $q = \max\{p, 2\}$. Suppose
$R\geq\|x_s^\ast-\bar y\|_p$. With the parameter choices of
\citet[Theorem~2]{diakonikolas2024complementary},
\cref{alg:generalized-agd} (AGD+) returns $y_N$ satisfying
$H_s(y_N)-H_s(x_s^\ast)\leq\delta$ after
$O_{p,\kappa}\!\left(1+
(LR^\kappa/\delta)^{1/\tau}\right)$ iterations, where
$\tau=((q+1)\kappa-q)/q$.
\end{fact}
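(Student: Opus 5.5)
The plan is to derive the statement from the Generalized AGD+ guarantee of \citet[Theorem~2]{diakonikolas2024complementary}. That guarantee covers composite objectives $f+\Psi$ where $f$ is $(L,\kappa)$-H\"older smooth, $\Psi$ is uniformly convex, and the auxiliary regularizer is $q$-uniformly convex. The work consists of verifying that our stage setting fits these hypotheses and then inverting the resulting rate. I do not re-derive the potential-function argument.

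\textbf{Checking the hypotheses.} Write $H_s=f+\Psi$, with $f$ convex and $(L,\kappa)$-H\"older smooth with respect to $\|\cdot\|_p$. Since $\Psi=\sum_i\alpha_i r_q(\cdot-c_i)$, \cref{fact:power_geometry} shows that each summand is $(\alpha_i\mu_{p,q},q)$-uniformly convex. Summing the Bregman lower bounds makes $\Psi$ uniformly convex of order $q$; the statement normalizes the modulus to $\mu_{p,q}$, and a larger modulus only helps. For the auxiliary regularizer $\phi_{\bar y}=\mu_{p,q}^{-1}r_q(\cdot-\bar y)$, \cref{fact:power_geometry} again gives $D_{\phi_{\bar y}}(x,z)\ge q^{-1}\|x-z\|_p^q$ and $\phi_{\bar y}(x)\ge q^{-1}\|x-\bar y\|_p^q$, so it is admissible with uniform-convexity order $q$. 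Its size at the comparator satisfies $\phi_{\bar y}(x_s^\ast)\le (q\mu_{p,q})^{-1}R^q=O_{p}(R^q)$.

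\textbf{Deriving the rate.} The AGD+ theorem bounds $H_s(y_N)-H_s(x_s^\ast)$ by $O_{p,\kappa}\bigl(L\,\phi_{\bar y}(x_s^\ast)^{\kappa/q}/N^{\tau}\bigr)$ plus the accumulated inexactness terms, where $\tau=((q+1)\kappa-q)/q$. This bound uses only the regularizer term (the uniform convexity of $\Psi$ is not needed for the upper bound). The inexactness comes from running the method on a $\delta_k$-inexactly smooth model via \cref{lemma:holder_inexact}. Under the schedule of that theorem, these terms are of the same order as the main term. Substituting $\phi_{\bar y}(x_s^\ast)^{\kappa/q}=O_p(R^\kappa)$ gives
\[
H_s(y_N)-H_s(x_s^\ast)\le C_{p,\kappa}\,\frac{LR^\kappa}{N^\tau}.
\]
Setting the right-hand side to at most $\delta$ and solving yields $N\ge (C_{p,\kappa}LR^\kappa/\delta)^{1/\tau}$. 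Rounding up adds the $1+$ term, which also covers the trivial case $\delta\ge C_{p,\kappa}LR^\kappa$. Note that $\tau>0$ because $\kappa>1$ implies $(q+1)\kappa>q$.

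\textbf{Main obstacle.} The delicate step is confirming that the theorem's bound indeed scales as $R^\kappa/N^\tau$ with the inexactness contributions absorbed. When $\kappa<2$, the inexact-smoothness parameters $M_k$ and $\delta_k$ must be tuned per iteration. I would first check that the rate in \citet[Theorem~2]{diakonikolas2024complementary} is stated with the auxiliary-regularizer value at the comparator in this form. If it is not, I would redo the short balancing of the $\sum_k A_k\delta_k$ term against the $M$-dependent term. The optimal trade-off between them is exactly what produces the exponent $\tau$.
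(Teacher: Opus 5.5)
Your proposal is correct and follows the paper's argument. The paper also invokes the second, uniform-convexity-free term in the $\min$ of Theorem~2 of Diakonikolas--Guzm\'an, which directly gives $O_{q,\kappa}\bigl(1+(L/\delta)^{q/((q+1)\kappa-q)}\phi_{\bar y}(x_s^\ast)^{\kappa/((q+1)\kappa-q)}\bigr)$ iterations, and then substitutes $\phi_{\bar y}(x_s^\ast)\le R^q/(q\mu_{p,q})$ together with $q/((q+1)\kappa-q)=1/\tau$.

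The only cosmetic difference is that you phrase the guarantee as an error bound in $N$ and invert it, while the paper reads off the iteration count directly since the inexactness schedule is tuned to the target $\delta$. The two are equivalent.
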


Indeed, the second argument in the $\min$ 
\citet[Theorem~2]{diakonikolas2024complementary} gives a rate
$$O_{q,\kappa}\!\left(
1+(L/\delta)^{q/((q+1)\kappa-q)}
\phi_{\bar y}(x_s^\ast)^{\kappa/((q+1)\kappa-q)}
\right),$$
for optimizing $H_s$ up to accuracy $\delta$.
Since \(q=\max\{2,p\}\), constants depending on \(q\) are equivalently
absorbed into the \(O_{p,\kappa}(\cdot)\) notation used in the statement.
Our choice satisfies $\phi_{\bar y}(x_s^\ast)
=(q\mu_{p,q})^{-1}\|x_s^\ast-\bar y\|_p^q
\leq R^q/(q\mu_{p,q})$
and $q/((q+1)\kappa-q) = 1/\tau$, from which the claimed bound follows.
Each iteration evaluates $\nabla f$ once. The use of this fast subroutine is essential for the final complexity of our algorithm.

\begin{algorithm}[t]
\caption{Accumulative Regularization for Hölder-smooth convex objectives}
\label{alg:holder-ar}
\footnotesize
\begin{algorithmic}[1]
    \Require Differentiable convex $(L, \kappa)$-Hölder smooth function $f$ in $\norm{\cdot}_p$, initial point $x_0$, bound on initial distance to a minimizer $R_0$, accuracy $\eps$. Regularizer $r_q$ with constant $\mu_{p,q}$, as in \cref{eq:shifted_power_call}
    \vspace{0.1cm}
    \hrule
    \vspace{0.1cm}
\State $q=\max\{2,p\}$,
    \State $\sigma_0=0$, $\sigma_1= \frac{(1-2^{-(\kappa-1)/2})\eps} {2\cdot3^{q-1}R_0^{q-1}}$, $\sigma_s = \sigma_12^{(s-1)(q-(\kappa+1)/2)}$

    \For{$s=1,\ldots, \left\lceil \frac{1}{\kappa-1}\log_+(\frac{2LR_0^{\kappa-1}}{\eps}) \right\rceil$}
    \State $R_s \gets R_{s-1} / 2$
    \quad
    $\delta_s=\mu_{p,q}\sigma_s R_s^q/q$
    \State $\Psi_s(x) = \sum_{i=1}^s (\sigma_i - \sigma_{i-1}) r_q(x-x_{i-1})$ ,
    \State $\phi_s(x)=
    \mu_{p,q}^{-1}r_q(x-x_{s-1})$
    \State $x_s \gets \AGDp(x_{s-1}, \delta_s, R_sis, f,\Psi_s, \phi_s)$
\EndFor

\State \Return $x_S$

\end{algorithmic}
\end{algorithm}

We use the following observation for the coverage of the cases $p=1$ and $p=\infty$ by our algorithm.
\begin{remark}[\(p=1\) and $p=\infty$]
\label{rem:endpoint-norms}
    For \(\widehat p=1+1/\log d\), it is $\|x\|_1 = \Theta(\|x\|_{\widehat p})$. Similarly, for \(\widehat p=\log d\), we have $\|x\|_{\infty} = \Theta(\|x\|_{\widehat p})$. Thus, a problem in $\norm{\cdot}_1$ or $\norm{\cdot}_\infty$ can be phrased as a problem in $\ell_{\widehat p}$, for those values of $\widehat{p}$, up to universal constant changes in problem parameters. This amounts to adding a dependence on $\log(d)$ on our rates for these two extreme points due to the dependence of $p$ in the $O_p(\cdot)$ complexity.
\end{remark}

Our complete method is given in \cref{alg:holder-ar}. We now present its guarantees.

\begin{theorem}[H\"older gradient-norm minimization]
\label{thm:accumulative-power}\linktoproof{thm:accumulative-power}
Under \cref{ass:holder-setting}, let
\(R_0\ge\|x_0-x^\ast\|_p\) for \(p\in[1,\infty)\).
Using \cref{alg:holder-ar}, we can obtain a point \(x_S\) such that 
\(
    \|\grad f(x_S)\|_{p^\ast}\le\eps
\)
using
\(
    O_{p,\kappa}\!\left(
        \left(\frac{L R_0^{\kappa-1}}{\eps}\right)^{
            q /((q+1)\kappa-q)}
    \right)
\)
gradient evaluations, where $q = \max\{p, 2\}$.
\end{theorem}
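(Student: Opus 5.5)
The plan is a stage-wise induction on the exact minimizers $x_s^\ast\in\argmin H_s$, where $H_s=f+\Psi_s$, run in parallel with the actual iterates $x_s$. Three things are needed: (i) invariants on distances, (ii) a final gradient bound obtained from the optimality condition of $H_S$, and (iii) a cost count based on \cref{fact:generalized-agd-inner}. Throughout, $H_s$ is $(\lambda_s,q)$-uniformly convex with $\lambda_s=\mu_{p,q}\sigma_s$, by \cref{fact:power_geometry} and the telescoping weights $\sum_{i\le s}\alpha_i=\sigma_s$. The endpoint cases $p\in\{1,\infty\}$ are reduced to $1<p<\infty$ by \cref{rem:endpoint-norms}, and I argue only for that range.

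\textbf{Distance invariants.} First, the output accuracy gives $\frac{\lambda_s}{q}\|x_s-x_s^\ast\|_p^q\le H_s(x_s)-H_s(x_s^\ast)\le\delta_s=\mu_{p,q}\sigma_sR_s^q/q$, hence $\|x_s-x_s^\ast\|_p\le R_s$. Second, I compare consecutive minimizers using $H_{s+1}=H_s+\alpha_{s+1}r_q(\cdot-x_s)$. Adding $H_{s+1}(x_{s+1}^\ast)\le H_{s+1}(x_s^\ast)$ to $H_s(x_s^\ast)\le H_s(x_{s+1}^\ast)$ gives $r_q(x_{s+1}^\ast-x_s)\le r_q(x_s^\ast-x_s)$. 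Therefore $\|x_{s+1}^\ast-x_s\|_p\le\|x_s^\ast-x_s\|_p\le R_s$. This is exactly the radius $R\ge\|x_{s+1}^\ast-\bar y\|_p$ that \cref{fact:generalized-agd-inner} needs at stage $s+1$ with center $\bar y=x_s$. The factor-$2$ offset between $R_s$ and $R_{s+1}$ is absorbed into constants. The base case uses $\Psi_1=\sigma_1r_q(\cdot-x_0)$: the same comparison against $x^\ast$ gives $\|x_1^\ast-x_0\|_p\le\|x^\ast-x_0\|_p\le R_0$. Chaining these bounds, for $i\le S$,
\[
\|x_S^\ast-x_{i-1}\|_p\le\|x_{i-1}^\ast-x_{i-1}\|_p+\sum_{j\ge i-1}\|x_{j+1}^\ast-x_j^\ast\|_p\le 3R_{i-1},
\]
using $\|x_{j+1}^\ast-x_j^\ast\|_p\le 2R_j$ and the geometric decay of $R_j$. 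I expect this bookkeeping to be the main obstacle. If the constant $3$ does not close, the fallback is to enlarge it, since the $3^{q-1}$ in $\sigma_1$ can absorb any fixed constant.

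\textbf{Gradient bound.} Optimality of $x_S^\ast$ gives $\nabla f(x_S^\ast)=-\sum_{i=1}^S\alpha_i\nabla r_q(x_S^\ast-x_{i-1})$. By \cref{fact:power_geometry},
\[
\|\nabla f(x_S^\ast)\|_{p^\ast}\le\sum_i\alpha_i(3R_{i-1})^{q-1}\le 3^{q-1}R_0^{q-1}\sum_i\sigma_i2^{-(i-1)(q-1)}.
\]
Here $\sigma_i2^{-(i-1)(q-1)}=\sigma_12^{-(i-1)(\kappa-1)/2}$, so the sum is geometric with ratio $2^{-(\kappa-1)/2}$. The choice of $\sigma_1$ in \cref{alg:holder-ar} then makes this at most $\eps/2$. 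Next, Hölder smoothness gives $\|\nabla f(x_S)-\nabla f(x_S^\ast)\|_{p^\ast}\le L\|x_S-x_S^\ast\|_p^{\kappa-1}\le LR_S^{\kappa-1}=LR_0^{\kappa-1}2^{-S(\kappa-1)}$. This is at most $\eps/2$ by the choice of $S$. Combining the two gives $\|\nabla f(x_S)\|_{p^\ast}\le\eps$.

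\textbf{Complexity.} By \cref{fact:generalized-agd-inner}, stage $s$ costs $O_{p,\kappa}\bigl(1+(LR_s^\kappa/\delta_s)^{1/\tau}\bigr)$ gradient evaluations, where $LR_s^\kappa/\delta_s\propto L/(\sigma_sR_s^{q-\kappa})$. The scaling is
\[
\sigma_sR_s^{q-\kappa}\propto\sigma_1R_0^{q-\kappa}2^{(s-1)(q-(\kappa+1)/2)-s(q-\kappa)},
\]
whose exponent grows like $s(\kappa-1)/2$. So the stage costs decay geometrically in $s$ and the sum is $O$ of the first stage. The first stage is $(LR_0^\kappa/\delta_1)^{1/\tau}$, and $\delta_1\asymp\sigma_1R_0^q\asymp\eps R_0$, so it equals $O_{p,\kappa}\bigl((LR_0^{\kappa-1}/\eps)^{q/((q+1)\kappa-q)}\bigr)$. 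The additive $O(1)$ per stage contributes $S=O(\log(LR_0^{\kappa-1}/\eps))$, which is dominated by the main term.
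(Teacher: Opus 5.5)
Your proposal follows the paper's proof essentially step for step. It uses the same comparison of consecutive regularized minimizers, giving $\|x_s^\ast-x_{s-1}\|_p\le\|x_{s-1}^\ast-x_{s-1}\|_p$. It has the same uniform-convexity localization $\|x_s-x_s^\ast\|_p\le R_s$, the same use of the optimality condition of $H_S$ with $\|\nabla r_q(u)\|_{p^\ast}=\|u\|_p^{q-1}$, and the same geometric sum of stage costs.

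There is one arithmetic slip, exactly where you flagged it. You start the telescoping at $x_{i-1}^\ast$ and pay $2R_j$ for every jump $j\ge i-1$, which gives $R_{i-1}+2\sum_{j=i-1}^{S-1}R_j$. That exceeds $3R_{i-1}$ as soon as $S>i$ and approaches $5R_{i-1}$. Since $\sigma_1$ is hard-coded with $3^{q-1}$ in \cref{alg:holder-ar}, this would only yield $\eps/2+(5/3)^{q-1}\eps/2$. Your fallback would then amount to altering the algorithm, or rescaling $\eps$.

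No change is needed. Start the chain at $x_i^\ast$ instead and use the bound $\|x_i^\ast-x_{i-1}\|_p\le\|x_{i-1}^\ast-x_{i-1}\|_p\le R_{i-1}$, which you already derived. Then $\|x_S^\ast-x_{i-1}\|_p\le R_{i-1}+2\sum_{j=i}^{S-1}R_j\le R_{i-1}+4R_i=3R_{i-1}$, which is exactly how the paper closes this step.

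A smaller point concerns the gradient-form inequality $\|\nabla f(x)-\nabla f(y)\|_{p^\ast}\le L\|x-y\|_p^{\kappa-1}$. It does not follow from the Bregman-form definition of H\"older smoothness with the same constant. The paper obtains it through generalized cocoercivity by setting $L:=c_\kappa L_0$. This only changes $\kappa$-dependent constants, but it should be stated, because $L$ enters the choice of $S$ and hence the bound $LR_S^{\kappa-1}\le\eps/2$.
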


We refer to \cref{rem:endpoint-norms} for the rate when $p=\infty$.

\begin{figure*}[t]
    \centering
    \includegraphics[width=\textwidth]{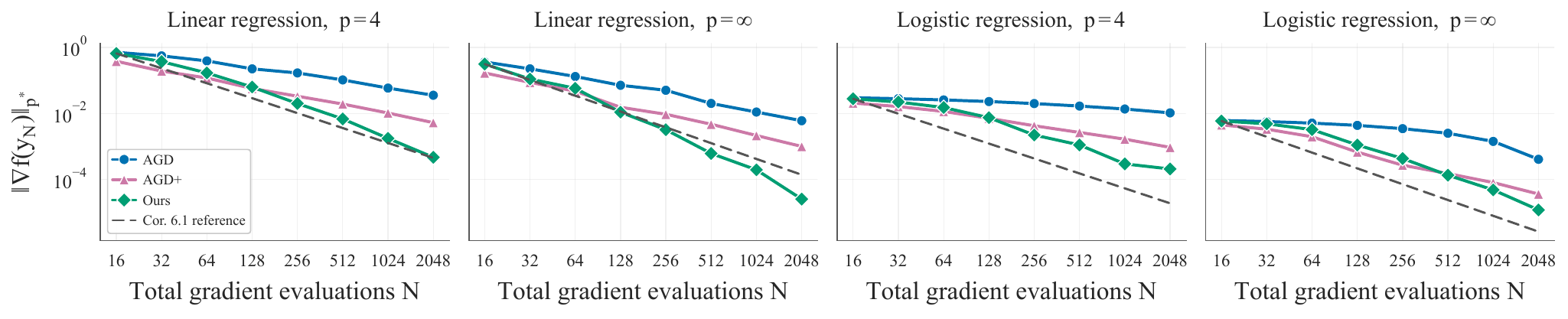}
    \caption{Dual-norm gradient convergence on $\ell_p$-norm for problems specified in \cref{sec:regression_problems} at $p\in\{4,\infty\}$ (with $p=\infty$ handled via the proxy $\widehat p=\log d$). Each panel plots $\norm{\nabla f(y_N)}_{p^\ast}$ versus the number of gradient evaluations $N$ for \AGD{}, AGD+~\citep{diakonikolas2024complementary} and our mirror-dual method. The dashed line is the smooth rate $N^{-(1+2/p)}$ from \cref{thm:mirror_final_rate}.}
    \label{fig:methods_comparison}
\end{figure*}

\cref{thm:accumulative-power} closes the aforementioned H\"older-smooth convex open problem on gradient minimization, which to the best of our knowledge was open even for the Euclidean case. Its current oracle model is more complex per iteration, cf. \cref{eq:shifted_power_call}, than that of the mirror-dual approach, while only requiring one gradient oracle call as well.

\section{Numerical Illustration}\label{sec:experiments}
We compare our mirror-dual method (\textit{\texttt{Ours}}) to two baselines: primal \AGD{} on the unregularised objective, and the regularisation-based gradient-norm minimiser of \citet{diakonikolas2024complementary} (denoted \textit{AGD+}). We consider $\ell_p$-norm linear regression $f(x)=\norm{Ax-b}_p^2$ and logistic regression $F(x)=\frac1n\sum_{i=1}^n\log(1+\exp(-y_i\langle a_i,x\rangle))$ at $p=4$ and $p=\infty$ (handled through the proxy $\widehat p=\log d$ of \cref{rem:endpoint-norms}). Data $a_i\in\R^d$ are drawn uniformly from the unit $p^\ast$-ball, so $\norm{a_i}_{p^\ast}\le1$ holds by construction. For linear regression $A\in\R^{60\times 30}$ and $b=Ax_{\text{true}}+0.05\eta$ with $\eta\sim\mathcal{N}(0,I_{60})$, while for logistic regression $n=200$ and $d=30$. Our objectives satisfy the assumptions in this work, as we show in 
\cref{prop:linear-regression-smoothness,prop:logistic-regression-smoothness}.

\Cref{fig:methods_comparison} reports the dual-norm gradient $\norm{\nabla f(y_N)}_{p^\ast}$ as a function of the gradient-evaluation budget $N\in\{16,32,\ldots,2048\}$. The dashed line in each panel is the smooth rate $N^{-(1+2/p)}$ predicted by \cref{thm:mirror_final_rate}. Across all four panels \texttt{Ours} strictly improves on AGD+: the empirical exponent gap matches the theoretical $2(p-1)/p$, and at $N=2048$ \texttt{Ours} ends roughly an order of magnitude below AGD+ in every panel, with the gap most pronounced at $p=\infty$, the endpoint regime handled through \cref{rem:endpoint-norms}. Logistic regression is in a pre-asymptotic regime (the small Lipschitz constant $L=\nicefrac14$ inflates the multiplicative constant in front of the rate), so the absolute slopes there have not yet reached the theoretical exponent. The ordering of the methods is nonetheless preserved.

\section{Conclusion}\label{sec:conclusion}

We studied the problem of obtaining verifiable small-gradient
certificates for H\"older-smooth convex optimization in
non-Euclidean geometry. Two complementary mechanisms emerge from our
analysis. Inexact mirror duality extends the time-reversal principle
behind accelerated gradient minimization beyond quadratic smoothness
and, at the smooth endpoint, closes the previously unresolved regime
\(p>2\). Accumulative power regularization instead exploits the
underlying geometry through the choice \(q=\max\{2,p\}\): successive
function-value guarantees localize the regularized minimizers, and this
localization transfers to a gradient bound for the original objective.

Taken together, these results characterize the optimal exponent of the
natural accuracy scale \(LR^{\kappa-1}/\eps\) for gradient minimization
throughout \(p \in [1, \infty)\) and \(1<\kappa\le2\). Thus, at the level of
gradient evaluations, the Hölder-smooth non-Euclidean complexity gap is
closed. The principal remaining issue is algorithmic: replacing the
exact multi-center shifted-power calls required by accumulative
regularization with efficiently implementable approximate solves
without sacrificing the near-optimal rate.

Future work includes replacing the multi-center shifted-power subproblem with approximate solves that preserve the overall oracle complexity, and extending the theory to constrained domains.

\bibliography{refs}
\appendix
\onecolumn

\section{Proofs of Preliminaries}

\begin{proof}[\linkofproof{fact:power_geometry}]
Let \(J_p(u):=(|u_i|^{p-2}u_i)_{i=1}^d\). For \(1<p\le2\), we have
\(q=2\) and
\(
\grad r_2(u)=\|u\|_p^{2-p}J_p(u).
\)
The normalized duality map satisfies
\(
\langle \grad r_2(x)-\grad r_2(y),x-y\rangle
\ge (p-1)\|x-y\|_p^2 .
\)
Writing \(d=y-x\), the integral representation of the Bregman divergence
gives
\[
D_{r_2}(y,x)
=\int_0^1\langle \grad r_2(x+td)-\grad r_2(x),d\rangle\,dt
\ge (p-1)\int_0^1 t\|d\|_p^2\,dt
=\frac{p-1}{2}\|d\|_p^2 .
\]

For \(p\ge2\), we have \(q=p\) and \(\grad r_p=J_p\). Using
\(
(|a|^{p-2}a-|b|^{p-2}b)(a-b)\ge 2^{2-p}|a-b|^p
\)
coordinatewise yields
\[
D_{r_p}(y,x)
=\int_0^1\langle J_p(x+td)-J_p(x),d\rangle\,dt
\ge \int_0^1 2^{2-p}t^{p-1}\|d\|_p^p\,dt
=\frac{2^{2-p}}{p}\|d\|_p^p .
\]
Hence \(r_q\) is \((\mu_{p,q},q)\)-uniformly convex with the claimed
constant.

Moreover,
\(
\grad r_q(u)=\|u\|_p^{q-p}J_p(u),
\)
and therefore
\(
\|\grad r_q(u)\|_{p^*}
=\|u\|_p^{q-p}\|J_p(u)\|_{p^*}
=\|u\|_p^{q-1},
\)
where we used \(\|J_p(u)\|_{p^*}=\|u\|_p^{p-1}\).
\end{proof}

\begin{proof}[\linkofproof{lemma:holder_inexact}]
By H\"older smoothness and Young's inequality, for every $x,y$ and
$\gamma>0$,
\begin{align*}
D_f(y,x)
&\leq \frac{L}{\kappa}\|y-x\|^\kappa
\leq \frac{L}{2\gamma^{2/\kappa}}\|y-x\|^2
+ L\gamma^{2/(2-\kappa)}\frac{2-\kappa}{2\kappa}.
\end{align*}
This is precisely $\delta$-inexact $\widetilde L$-smoothness with the
parameters stated in the lemma.
\end{proof}

\begin{proof}[\linkofproof{prop:conjugate_holder}]
Applying uniform convexity with $(x,y)$ and $(y,x)$ and adding gives
\[
\innp{\nabla\psi(x)-\nabla\psi(y),x-y}
\geq \frac{2\mu}{p}\|x-y\|^p.
\]
Uniform convexity makes \(\psi\) coercive, so for every \(u\) the
function \(x\mapsto\psi(x)-\langle u,x\rangle\) has a unique minimizer.
Hence \(\grad\psi\) is onto and
\(\grad\psi^*=(\grad\psi)^{-1}\).
Let $u=\nabla\psi(x)$ and $v=\nabla\psi(y)$. By H\"older's inequality,
\[
\|x-y\|^{p-1}
\leq \frac{p}{2\mu}\|u-v\|_*,
\]
and hence
\[
\|\nabla\psi^*(u)-\nabla\psi^*(v)\|
\leq
\left(\frac{p}{2\mu}\right)^{1/(p-1)}
\|u-v\|_*^{1/(p-1)}.
\]
Since \(1/(p-1)=p^*-1\), we have
\[
    \|\grad\psi^*(u)-\grad\psi^*(v)\|
    \leq
    M\|u-v\|_*^{p^*-1},
    \qquad
    M=\left(\frac{p}{2\mu}\right)^{p^*-1}.
\]
Integrating this inequality along the line segment from \(v\) to \(u\)
gives
\[
\begin{aligned}
    D_{\psi^*}(u,v)
    &=
    \int_0^1
    \left\langle
        \grad\psi^*(v+t(u-v))-\grad\psi^*(v),
        u-v
    \right\rangle dt \\
    &\leq
    M\|u-v\|_*^{p^*}
    \int_0^1 t^{p^*-1}\,dt
    =
    \frac{M}{p^*}\|u-v\|_*^{p^*}.
\end{aligned}
\]
Thus, in the sense of
\cref{def:uniform-convexity-holder-smoothness},
\(\psi^*\) is \((M,p^*)\)-H\"older smooth.
\end{proof}

\section{Proofs of Inexact Mirror Duality}

We give the details behind the two estimates summarized in
\cref{sec:mirror_duality}. The proof follows the inexact mirror-duality
argument used in the earlier version of this paper.

\begin{lemma}[Inexact cocoercivity]
\label{lem:inexact_cocoercivity}
If $\varphi$ is convex and $\delta$-inexact $K$-smooth with respect to
$\|\cdot\|$, then
\[
D_\varphi(x,y)
\geq
\frac{1}{2K}\|\nabla\varphi(x)-\nabla\varphi(y)\|_*^2-\delta.
\]
\end{lemma}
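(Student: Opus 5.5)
We prove this with the standard descent-step argument behind exact cocoercivity, carrying the additive $\delta$ through unchanged. Fix $y$ and define the tilted function
\[
h(z)\defi\varphi(z)-\innp{\nabla\varphi(y),z}.
\]
It is convex and differentiable, with $\nabla h(z)=\nabla\varphi(z)-\nabla\varphi(y)$. Since $h$ differs from $\varphi$ by a linear term, $D_h=D_\varphi$. Hence $h$ is also $\delta$-inexact $K$-smooth in the sense of \cref{def:inexact_holder_smooth}. Moreover $\nabla h(y)=0$, so by convexity $y$ is a global minimizer of $h$, i.e.\ $h(y)\le h(w)$ for every $w\in\R^d$. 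The quantity to bound is exactly $h(x)-h(y)=D_\varphi(x,y)$.

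Next, write $g\defi\nabla h(x)=\nabla\varphi(x)-\nabla\varphi(y)$. Apply inexact smoothness of $h$ at the base point $x$:
\[
h(w)\le h(x)+\innp{g,w-x}+\frac{K}{2}\|w-x\|^2+\delta \qquad\text{for all } w.
\]
Since $\R^d$ is finite-dimensional, the unit ball of $\|\cdot\|$ is compact. Choose a unit vector $v$ with $\innp{g,v}=\|g\|_*$; this attains the supremum in the definition of the dual norm. Take the descent point
\[
w=x-\frac{\|g\|_*}{K}\,v.
\]
Then $\innp{g,w-x}=-\|g\|_*^2/K$ and $\frac K2\|w-x\|^2=\|g\|_*^2/(2K)$, so
\[
h(w)\le h(x)-\frac{1}{2K}\|g\|_*^2+\delta .
\]
If $g=0$, the claim is trivial because $D_\varphi\ge 0$.

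Finally, combine this with $h(y)\le h(w)$ to get $h(y)\le h(x)-\frac{1}{2K}\|g\|_*^2+\delta$. Rearranging gives
\[
D_\varphi(x,y)=h(x)-h(y)\ge\frac{1}{2K}\|\nabla\varphi(x)-\nabla\varphi(y)\|_*^2-\delta,
\]
which is the claim.

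The only step needing any care is the choice of the dual-attaining direction $v$ in a general norm, which finite-dimensional compactness provides. Even without compactness, a $\sup$-approximating sequence of unit vectors would give the same bound in the limit. The remaining point to keep track of is that the additive $\delta$ enters exactly once, through a single application of inexact smoothness at $x$.
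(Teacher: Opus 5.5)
Your proof is correct and follows the same argument as the paper. Both proofs tilt $\varphi$ by $\langle\nabla\varphi(y),\cdot\rangle$ so that $y$ becomes a global minimizer, apply inexact smoothness at base point $x$, and minimize the resulting quadratic upper bound. The only difference is that you write down the minimizer explicitly using a dual-norm-attaining direction, where the paper simply invokes Fenchel duality for $\min_z\{\langle g,z-x\rangle+\frac{K}{2}\|z-x\|^2\}=-\frac{1}{2K}\|g\|_*^2$.
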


\begin{proof}
Fix $y$ and define
$\Phi(x)=\varphi(x)-\innp{\nabla\varphi(y),x}$. Then $y$ minimizes
$\Phi$. Inexact smoothness gives, for every $z$,
\[
\Phi(z)
\leq
\Phi(x)+\innp{\nabla\Phi(x),z-x}
+\frac{K}{2}\|z-x\|^2+\delta.
\]
Minimizing the right-hand side over $z$ and using Fenchel duality yields
\[
\Phi(y)
\leq
\Phi(x)-\frac{1}{2K}\|\nabla\Phi(x)\|_*^2+\delta.
\]
The result follows from
$\nabla\Phi(x)=\nabla\varphi(x)-\nabla\varphi(y)$ and
$\Phi(x)-\Phi(y)=D_\varphi(x,y)$.
\end{proof}

\subsection{Generalized Mirror Duality}

A coupled first-order method (CFOM) has the general form
\begin{align*}
y_{k+1} &= y_k - \sum_{j=0}^{k} \alpha_{k+1,j} \nabla f(x_j), \\
x_{k+1} &= x_k - \sum_{j=0}^{k+1} \beta_{k+1,j} \nabla \psi^*(y_j).
\end{align*}
Let $\mathcal A$ be a primal CFOM and let $\mathcal B$ be its mirror
dual, obtained by time-reversing the two coefficient arrays. For fixed
quadratic model constants $\widehat L,\widehat M>0$, initialize the
primal potential at
\[
    U_0\defi D_\psi(x^*,x_0)
\]
and define
\begin{align*}
U_{t+1}
&\defi U_t-a_{t+1}D_f(x^*,x_{t+1})\\
&\quad-A_t\left(
D_f(x_t,x_{t+1})
-\frac1{2\widehat L}
\|\nabla f(x_t)-\nabla f(x_{t+1})\|_*^2
\right)\\
&\quad-\left(
D_{\psi^*}(y_t,y_{t+1})
-\frac1{2\widehat M}
\|\nabla\psi^*(y_{t+1})-\nabla\psi^*(y_t)\|^2
\right)
\end{align*}
for $t=0,\ldots,T-1$. Collecting the terms in the terminal expansion
defines the primal residual functional $\mathbf U_{\mathcal A}$ by
\begin{equation}
\label{eq:appendix_primal_residual}
U_T
=A_T\bigl(f(x_T)-f(x^*)\bigr)
+\psi(x^*)+\psi^*(y_T)-\innp{y_T,x^*}
+\mathbf U_{\mathcal A}.
\end{equation}

Initialize the time-reversed dual potential at
\[
    V_0\defi\frac{f(q_0)-f(x^*)}{A_T}.
\]
Applying the same interpolation chain after reversing the coefficient
arrays defines its terminal residual $\mathbf V_{\mathcal B}$ through
\begin{equation}
\label{eq:appendix_dual_residual}
V_T
=\psi^*(r_T)+D_{\psi^*}(0,r_0)
+\mathbf V_{\mathcal B}.
\end{equation}
Both residuals are quadratic functionals of the oracle values generated
by their respective CFOMs.

\begin{theorem}[Generalized mirror duality
{\citep[Theorem~1]{kim2023mirror}}]
\label{thm:appendix_generalized_mirror_duality}
With the definitions above,
\[
    \inf_{x_0,\ldots,x_T,\,y_0,\ldots,y_T}
    \mathbf U_{\mathcal A}
    =
    \inf_{q_0,\ldots,q_T,\,r_0,\ldots,r_T}
    \mathbf V_{\mathcal B}.
\]
In particular, $\mathbf U_{\mathcal A}\geq0$ for all primal oracle
values implies $\mathbf V_{\mathcal B}\geq0$ for all dual oracle
values.
\end{theorem}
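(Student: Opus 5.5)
The plan is to reduce both sides to an explicit comparison of two quadratic forms in free vector variables, and then exhibit a linear bijection carrying one form onto the other. The starting observation is that the infimum in the statement is taken over the oracle values themselves, not over actual trajectories of $f$ and $\psi^\ast$. I will therefore introduce the primal oracle outputs $g_j\defi\nabla f(x_j)$ and $h_j\defi\nabla\psi^\ast(y_j)$, for $j=0,\ldots,T$, as independent variables. Similarly, for $\mathcal B$ I introduce $\tilde g_j\defi\nabla f(q_j)$ and $\tilde h_j\defi\nabla\psi^\ast(r_j)$.

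\textbf{Step 1: $\mathbf U_{\mathcal A}$ as an explicit bilinear form.} I expand $U_T$ from its definition by telescoping the recursion from $U_0=D_\psi(x^\ast,x_0)$. Each Bregman term $D_f(\cdot,\cdot)$ and $D_{\psi^\ast}(\cdot,\cdot)$ is written via function values and inner products. The function values then cancel against the terminal terms $A_T(f(x_T)-f(x^\ast))$ and $\psi(x^\ast)+\psi^\ast(y_T)-\innp{y_T,x^\ast}$ in \cref{eq:appendix_primal_residual}, using $a_{t+1}=A_{t+1}-A_t$ and the Fenchel identity $\psi(x)+\psi^\ast(y)=\innp{x,y}$ at $y=\nabla\psi(x)$. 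Next I substitute the CFOM updates $y_{k+1}-y_k=-\sum_j\alpha_{k+1,j}g_j$ and $x_{k+1}-x_k=-\sum_j\beta_{k+1,j}h_j$. What remains is
\[
\mathbf U_{\mathcal A}=\sum_{i,j}\innp{g_i,P_{ij}h_j}+\sum_i\frac{1}{2\widehat L}\|\Delta g_i\|_\ast^2+\sum_i\frac{1}{2\widehat M}\|\Delta h_i\|^2,
\]
with scalar coefficients $P_{ij}$ determined by $(\alpha,\beta,A)$ and $\Delta$ denoting consecutive differences. I also need to check that no dependence on $x^\ast$ survives. It should cancel once $x^\ast$ is treated as a stationary point ($\nabla f(x^\ast)=0$) and $x_0$ is absorbed into the initial potential.

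\textbf{Step 2: the same expansion for $\mathbf V_{\mathcal B}$.} I repeat Step 1 for the dual potential, starting from $V_0=(f(q_0)-f(x^\ast))/A_T$ and using the time-reversed arrays $\alpha^{\mathcal B}_{k+1,j}=\alpha_{T-j,T-1-k}$ and the analogous $\beta^{\mathcal B}$. Here the roles of the two oracles are swapped: $\mathcal B$ steps in the primal variable with $\nabla f$ and in the dual variable with $\nabla\psi^\ast$.

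\textbf{Step 3: congruence and equality of infima.} I will show that the linear map
\[
\tilde g_j\mapsto h_{T-j},\qquad \tilde h_j\mapsto g_{T-j},
\]
up to a fixed scalar normalization by $A_T$, transforms $\mathbf V_{\mathcal B}$ into $\mathbf U_{\mathcal A}$. On the bilinear part this is the identity $P^{\mathcal B}=J(P^{\mathcal A})^{\top}J$, where $J$ is the anti-identity; it follows from the reversal of the coefficient arrays. On the quadratic part, the reversal maps consecutive differences to consecutive differences, so the $\widehat L$-penalty on $f$-gradients becomes the $\widehat M$-penalty on $\psi^\ast$-gradients and vice versa. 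Since this map is a bijection of the variable spaces, the two infima coincide. The ``in particular'' clause is then immediate: if $\mathbf U_{\mathcal A}\ge0$ everywhere, its infimum is nonnegative, hence so is every value of $\mathbf V_{\mathcal B}$.

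\textbf{Expected obstacle.} The difficult step is the bookkeeping in Steps 1--2. The weights $A_t$ multiply the $f$-remainders in $\mathcal A$ but should appear as $1/A_{T-t}$-type scalings in $\mathcal B$, because of the normalization $V_0=\Delta/A_T$. The boundary terms $D_\psi(x^\ast,x_0)$ and $D_{\psi^\ast}(0,r_0)$ must also match exactly. My first attempt will be to track everything in matrix form with $J$, and to verify the boundary terms on the case $T=1$ before treating general $T$. If an exact equality of the forms fails, I will fall back on a weaker congruence, equality up to a positive rescaling of variables, which is all that is needed for the infima to agree. Throughout I rely on the construction of \citet[Theorem~1]{kim2023mirror}, which the statement cites.
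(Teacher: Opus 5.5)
\textbf{The bijection in Step~3 is wrong, and the error starts in Step~2.} Your overall plan matches what the paper imports from \citet[Theorem~1]{kim2023mirror}: write both residuals as quadratic forms in the oracle outputs and exhibit a linear bijection carrying one onto the other. But the map you choose cannot be a congruence in the non-Euclidean setting.

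In \cref{alg:mirror-dual-agd} the mirror dual updates $q_{k+1}=q_k-\sum_i\alpha_{T-i,T-1-k}\nabla\psi^\ast(r_i)$ and $r_{k+1}=r_k-\sum_i\beta_{T-i,T-1-k}\nabla f(q_i)$. So, exactly as in $\mathcal A$, the primal-space iterate moves along $\nabla\psi^\ast$-outputs and the dual-space iterate along $\nabla f$-outputs. What is exchanged is which coefficient array multiplies which oracle. Combined with the anti-transposition of the arrays, this only reverses the direction of each coupling; the type of each oracle stream is unchanged. Your Step~2 claim that $\mathcal B$ steps the primal variable with $\nabla f$ is therefore incorrect.

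Your map $\tilde g_j\mapsto h_{T-j}$, $\tilde h_j\mapsto g_{T-j}$ sends $\nabla f$-outputs to $\nabla\psi^\ast$-outputs. $\mathbf V_{\mathcal B}$ penalizes the former in $\norm{\cdot}_\ast$ with constant $1/\widehat L$, while $\mathbf U_{\mathcal A}$ penalizes the latter in $\norm{\cdot}$ with constant $1/\widehat M$. Set every variable to zero except one interior $\tilde g_j$. All bilinear terms vanish, leaving a positive multiple of $\norm{\tilde g_j}_\ast^2$ on one side and a positive multiple of $\norm{\tilde g_j}^2$ on the other. For $\ell_p$ with $p\neq2$ these are not proportional. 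Hence neither exact equality nor your fallback (equality up to positive rescaling) can hold, and $P^{\mathcal B}=J(P^{\mathcal A})^\top J$ is the wrong identity to aim for. The non-Euclidean case $p>2$ is the one the paper needs, so this is the core of the argument.

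\textbf{The fix is a type-preserving reversal.} The correspondence behind \citet[Theorem~1]{kim2023mirror}, and what the paper's proof means by ``time reversal permutes the oracle values'', is $\nabla f(q_j)\leftrightarrow\nabla f(x_{T-j})$ and $\nabla\psi^\ast(r_j)\leftrightarrow\nabla\psi^\ast(y_{T-j})$. The index pattern confirms this is the map compatible with the arrays. In $\mathcal A$, $\alpha_{a,b}$ routes $\nabla f(x_b)$ into $y_a$. In $\mathcal B$, it routes $\nabla\psi^\ast(r_{T-a})$ into $q_{T-b}$, which is the same edge reversed under this identification; $\beta$ behaves the same way.

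Under this map, $\norm{\cdot}_\ast$-penalties go to $\norm{\cdot}_\ast$-penalties and $\norm{\cdot}$-penalties go to $\norm{\cdot}$-penalties. What must be verified on the bilinear part is $P^{\mathcal B}_{jl}\propto P^{\mathcal A}_{T-j,T-l}$, a reversal of both index sets rather than a transpose. You also need the penalty weights to match after the normalization by $A_T$.

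\textbf{Two smaller corrections.}
\begin{enumerate}
\item The linear terms in $x^\ast$ do not cancel via $\nabla f(x^\ast)=0$; no $\nabla f(x^\ast)$ ever enters. They cancel via $y_0=\nabla\psi(x_0)$ and the fact that the total weight with which each $\nabla f(x_t)$ enters $y_T-y_0$ equals the multiplier of $D_f(x^\ast,x_t)$.
\item $D_\psi(x^\ast,x_0)$ and $D_{\psi^\ast}(0,r_0)$ lie in $U_0$ and $V_T$, outside the residuals, so they need not be matched.
\end{enumerate}
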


\begin{proof}
The identity is algebraic: time reversal permutes the oracle values and
transposes the two coefficient arrays in the residual quadratic form.
It therefore does not require exact smoothness of $f$ or exact strong
convexity of $\psi$. The equality is precisely
\citet[Theorem~1]{kim2023mirror} applied to
$\mathcal A$ and its mirror dual $\mathcal B$.
\end{proof}

For the remainder of this section, set
\[
\psi(u)=\frac1p\|u\|_p^p,
\qquad
M=\left(\frac{p}{2\mu_p}\right)^{p^*-1},
\qquad
\mu_p=2^{2-p}.
\]
Thus $M=O_p(1)$ and
$\psi^*(z)=\frac1{p^*}\|z\|_{p^*}^{p^*}$. Introduce the inexact
smoothness parameters
\[
\begin{gathered}
\widehat M=
\begin{cases}
M, & p^*=2,\\
M\gamma_1^{-2/p^*}, & p^*<2,
\end{cases}
\qquad
\delta_\psi=
\begin{cases}
0, & p^*=2,\\
\displaystyle
\frac{M(2-p^*)}{2p^*}\gamma_1^{2/(2-p^*)}, & p^*<2,
\end{cases}
\\[1ex]
\widehat L=
\begin{cases}
L, & \kappa=2,\\
L\gamma_2^{-2/\kappa}, & \kappa<2,
\end{cases}
\qquad
\delta_f=
\begin{cases}
0, & \kappa=2,\\
\displaystyle
\frac{L(2-\kappa)}{2\kappa}
\gamma_2^{2/(2-\kappa)}, & \kappa<2.
\end{cases}
\end{gathered}
\]
These expressions follow from \cref{lemma:holder_inexact}, applied to
$f$ and $\psi^*$. We use the accelerated weights
\begin{equation}
\label{eq:appendix_mirror_weights}
a_t=\frac{t}{2\widehat L\widehat M},
\qquad
A_t=\sum_{i=1}^t a_i
=\frac{t(t+1)}{4\widehat L\widehat M}.
\end{equation}
They satisfy
\begin{equation}
\label{eq:appendix_weight_psd}
a_t^2\leq\frac{A_t}{\widehat L\widehat M}.
\end{equation}

\begin{lemma}[Power tradeoff]
\label{lem:appendix_power_tradeoff}
Let $A,B,u,v>0$. Then the function
$h(s)=As^u+Bs^{-v}$ has the unique minimizer
\[
    s^*=\left(\frac{vB}{uA}\right)^{1/(u+v)}.
\]
Moreover,
\[
    h(s^*)
    =A^{v/(u+v)}B^{u/(u+v)}
      \left[
        \left(\frac vu\right)^{u/(u+v)}
        +\left(\frac uv\right)^{v/(u+v)}
      \right].
\]
\end{lemma}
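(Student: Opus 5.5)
We minimize $h(s)=As^u+Bs^{-v}$ on $(0,\infty)$ by elementary one-variable calculus, then substitute the critical point back in.

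First, $h$ is continuous on $(0,\infty)$ with $h(s)\to\infty$ both as $s\to0^+$ (since $Bs^{-v}\to\infty$) and as $s\to\infty$ (since $As^u\to\infty$). Hence $h$ attains a minimum at an interior point, which must be a critical point.

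Differentiating gives
\[
h'(s)=uAs^{u-1}-vBs^{-v-1}=s^{-v-1}\bigl(uAs^{u+v}-vB\bigr).
\]
The factor $uAs^{u+v}-vB$ is strictly increasing in $s$, because $u+v>0$. So $h'$ has exactly one zero, $s^*=(vB/(uA))^{1/(u+v)}$. Moreover $h'<0$ to its left and $h'>0$ to its right, so $s^*$ is the unique minimizer.

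For the value, the critical-point equation $uA(s^*)^{u+v}=vB$ gives
\[
B(s^*)^{-v}=\frac{u}{v}A(s^*)^u,
\qquad\text{hence}\qquad
h(s^*)=A(s^*)^u\Bigl(1+\frac uv\Bigr).
\]
Substituting the formula for $s^*$,
\[
A(s^*)^u=A\Bigl(\frac{vB}{uA}\Bigr)^{u/(u+v)}=A^{v/(u+v)}B^{u/(u+v)}\Bigl(\frac vu\Bigr)^{u/(u+v)}.
\]
It remains to check that
\[
\Bigl(\frac vu\Bigr)^{u/(u+v)}\Bigl(1+\frac uv\Bigr)=\Bigl(\frac vu\Bigr)^{u/(u+v)}+\Bigl(\frac uv\Bigr)^{v/(u+v)}.
\]
The second term on the left is
\[
\frac uv\Bigl(\frac vu\Bigr)^{u/(u+v)}=\Bigl(\frac uv\Bigr)^{1-u/(u+v)}=\Bigl(\frac uv\Bigr)^{v/(u+v)},
\]
which is exactly the second term on the right. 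This gives the stated value of $h(s^*)$.

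The only step needing any care is this last exponent bookkeeping, $1-u/(u+v)=v/(u+v)$.
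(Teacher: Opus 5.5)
Your proof is correct and follows the paper's route: differentiate, solve $h'(s)=0$, and substitute $s^*$ back into $h$. The one difference is that you get uniqueness from the sign change of $h'$ (the factor $uAs^{u+v}-vB$ is strictly increasing), whereas the paper uses strict convexity of $z\mapsto h(e^z)$; both arguments are valid.
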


\begin{proof}
Differentiate $h$, solve $h'(s)=0$, and substitute the resulting
$s^*$ back into $h$. Strict convexity after the logarithmic change of
variables $s=e^z$ gives uniqueness.
\end{proof}

\begin{lemma}[Primal warm-start rate]
\label{lem:appendix_primal_warm_start}
The accelerated primal CFOM underlying
$\mathsf{MD}(\mathsf{AGD})$, initialized at $x_0$, can be tuned so that
after $T$ gradient evaluations its output $x_T$ satisfies
\[
f(x_T)-f(x^*)
=O_{p,\kappa}\!\left(
\frac{LR^\kappa}
{T^{((p+1)\kappa-p)/p}}
\right),
\qquad
R\geq\|x_0-x^*\|_p.
\]
\end{lemma}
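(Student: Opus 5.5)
The lemma asks for a function-value rate for the primal accelerated method in $\ell_p$ geometry, $p\ge2$, with the uniformly convex regularizer $\psi(u)=\frac1p\|u\|_p^p$. I would prove it by a standard inexact-oracle analysis of AGD in that geometry, in the style of \citet{devolder2014first}. The approach is to represent $f$ as $\delta_f$-inexact $\widehat L$-smooth via \cref{lemma:holder_inexact}, and to represent $\psi$ shifted at $x_0$ (that is, $\psi(\cdot-x_0)$, which is $(\mu_p,p)$-uniformly convex) as $\delta$-inexact strongly convex. For the latter I would apply Young's inequality to $\frac{\mu_p}{p}\|x-y\|_p^p\ge\frac{\nu}{2}\|x-y\|_p^2-\delta_\psi'$, with $\delta_\psi'\asymp \nu^{p/(p-2)}$ (suppressing $\mu_p$-constants). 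Alternatively one could invoke directly \citet[Theorem~2]{diakonikolas2024complementary} (\cref{fact:generalized-agd-inner} with $\Psi=0$ and $q=p$). That gives exactly the exponent $\tau=((p+1)\kappa-p)/p$ with $R^\kappa$ dependence, so I would state the result as a consequence of it, after checking that the CFOM form matches. Still, I would sketch the self-contained estimate-sequence argument as follows.

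\textbf{Step 1 (potential).} Use the dual-averaging form $y_{t}=-\sum_{i\le t}a_i\nabla f(x_i)$ with mirror step $\nabla\psi^*$ and accelerated coupling. Then show the standard inequality
\[
A_T\bigl(f(x_T)-f^*\bigr)\le D_\psi(x^*,x_0)+\sum_{t}A_t\delta_f+\sum_t\delta_\psi' ,
\]
under the condition $a_t^2\le A_t\nu/\widehat L$. The coupling argument goes through unchanged, since only the quadratic lower bound on $D_\psi$ (with additive error) and the quadratic upper bound on $D_f$ (with additive error) are used. Each step incurs one $\delta_f$ weighted by $A_t$ and one $\delta_\psi'$.

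\textbf{Step 2 (tuning).} With $A_t\asymp \nu t^2/\widehat L$ and $D_\psi(x^*,x_0)\lesssim R^p$, dividing by $A_T$ gives
\[
f(x_T)-f^*\lesssim \frac{\widehat L R^p}{\nu T^2}+T\delta_f+\frac{\widehat L\,\delta_\psi'}{\nu T}.
\]
Substitute $\widehat L=L\gamma^{-2/\kappa}$ and $\delta_f\asymp L\gamma^{2/(2-\kappa)}$, together with $\delta_\psi'\asymp\nu^{p/(p-2)}$. Then optimize $\nu$ and $\gamma$ in turn using \cref{lem:appendix_power_tradeoff}. For $\nu$, balance $R^p/\nu T^2$ against $\nu^{2/(p-2)}/T$, which gives $\nu\asymp R^{p-2}T^{-(p-2)/p}$. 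This leaves an effective rate $\widehat L R^2/T^{1+2/p}$, consistent with the smooth case $LR^2/T^{(p+2)/p}$. For $\gamma$, trade $L\gamma^{-2/\kappa}R^2T^{-(p+2)/p}$ against $TL\gamma^{2/(2-\kappa)}$. The resulting exponent of $T$ is
\[
\frac{\kappa}{2}\cdot\frac{p+2}{p}-\Bigl(1-\frac{\kappa}{2}\Bigr)=\frac{(p+1)\kappa-p}{p},
\]
and the power of $R$ is $R^{2\cdot\kappa/2}=R^\kappa$, as claimed.

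\textbf{Main obstacle.} The delicate part is Step 1 in the presence of errors on \emph{both} sides. An error in the strong convexity of the regularizer enters at every mirror step, and it must not be multiplied by $A_t$, or the tuning breaks. I would handle this by using the three-point/Bregman telescoping, which is exact in $\psi$, and by invoking the inexact lower bound only in the single place where $D_\psi(z_{t+1},z_t)$ is compared with $\frac{\nu}{2}\|z_{t+1}-z_t\|^2$. There the error appears with coefficient $1$. If the bookkeeping fails, I would fall back on citing \citet[Theorem~2]{diakonikolas2024complementary} for this primal phase, since its rate coincides with the claim.
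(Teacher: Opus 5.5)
Your proposal is correct and follows essentially the same route as the paper. Both arguments use an estimate-sequence bound $f(x_T)-f(x^*)\le \bigl(D_{\psi_0}(x^*,x_0)+T\delta_\psi+\delta_f\sum_{t=1}^{T-1}A_t\bigr)/A_T$ with $A_t\asymp t^2/(\widehat L\widehat M)$, then tune first the regularizer's inexactness and then that of $f$ via \cref{lem:appendix_power_tradeoff}; the only cosmetic difference is that the paper encodes the regularizer error as $\delta_\psi$-inexact $\widehat M$-smoothness of $\psi_0^*$ (via \cref{prop:conjugate_holder} and \cref{lemma:holder_inexact}), which is Fenchel-dual to your inexact $\nu$-strong convexity of $\psi_0$ with $\nu=1/\widehat M$ and gives the same scaling $\delta_\psi\asymp\widehat M^{-p/(p-2)}$.
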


\begin{proof}
Use the centered regularizer
$\psi_0(x)=\frac1p\|x-x_0\|_p^p$ and the weights in
\cref{eq:appendix_mirror_weights}. The accelerated estimate-sequence
expansion, with \cref{lem:inexact_cocoercivity} applied to $f$ and
$\psi_0^*$, gives
\begin{equation}
\label{eq:appendix_primal_potential}
f(x_T)-f(x^*)
\leq
\frac{
D_{\psi_0}(x^*,x_0)+T\delta_\psi
+\delta_f\sum_{t=1}^{T-1}A_t
}{A_T}.
\end{equation}
For completeness, the only residual term in this expansion is a sum of
quadratic forms of the type
\[
\frac{A_t}{2\widehat L}\|g_t-g_{t+1}\|_*^2
+\frac{1}{2\widehat M}\|z_t-z_{t+1}\|^2
+a_t\innp{g_t-g_{t+1},z_{t+1}-z_t}.
\]
Their sum is the residual $\mathbf U_{\mathcal A}$ in
\eqref{eq:appendix_primal_residual}. Each form is nonnegative by
H\"older's inequality and \cref{eq:appendix_weight_psd}. Hence
$\mathbf U_{\mathcal A}\geq0$, which proves
\cref{eq:appendix_primal_potential}.

Set $D\defi R^p/p\geq D_{\psi_0}(x^*,x_0)$ and denote the
right-hand side of \eqref{eq:appendix_primal_potential}, with
$D_{\psi_0}(x^*,x_0)$ replaced by $D$, by
\[
    h_{\mathrm P}(\gamma_1,\gamma_2)
    \defi
    \frac{D+T\delta_\psi(\gamma_1)
    +\delta_f(\gamma_2)\sum_{t=1}^{T-1}A_t}{A_T}.
\]
Thus $f(x_T)-f(x^*)\leq h_{\mathrm P}(\gamma_1,\gamma_2)$.

Suppose first that $p>2$ and $1<\kappa<2$. Define
\[
\alpha\defi\frac{2}{p^*},
\quad
r\defi\frac{2}{2-p^*}-\frac{2}{p^*},
\quad
C_1\defi\frac{4LMD}{T(T+1)},
\quad
C_2\defi\frac{2LM^2(2-p^*)}{p^*(T+1)},
\]
and $C_3\defi L(2-\kappa)(T-1)/(6\kappa)$. Direct substitution gives
\[
 h_{\mathrm P}(\gamma_1,\gamma_2)
 =\gamma_2^{-2/\kappa}
  \left(C_1\gamma_1^{-\alpha}+C_2\gamma_1^r\right)
 +C_3\gamma_2^{2/(2-\kappa)}.
\]
By \cref{lem:appendix_power_tradeoff}, the first tradeoff is minimized
at
\begin{equation}
\label{eq:appendix_primal_gamma1}
\gamma_{1,\mathrm P}
=
\left(\frac{\alpha C_1}{rC_2}\right)^{1/(\alpha+r)}
=
\left(\frac{pD}{TM}\right)^{(2-p^*)/2}.
\end{equation}
Let
\[
G_{\mathrm P}
\defi
C_1\gamma_{1,\mathrm P}^{-\alpha}
+C_2\gamma_{1,\mathrm P}^{r}.
\]
The second tradeoff is minimized at
\begin{equation}
\label{eq:appendix_primal_gamma2}
\gamma_{2,\mathrm P}
=
\left(\frac{6G_{\mathrm P}}{L(T-1)}\right)^{\kappa(2-\kappa)/4}.
\end{equation}
If $p=2$, the $\gamma_1$ tradeoff disappears and we use the same
$\gamma_{2,\mathrm P}$ after setting
$G_{\mathrm P}=4LMD/[T(T+1)]$. If $\kappa=2$, the $\gamma_2$
tradeoff disappears and only \eqref{eq:appendix_primal_gamma1} is used.
If $p=\kappa=2$, neither parameter is needed. In all four cases,
substitution gives
\[
f(x_T)-f(x^*)
=O_{p,\kappa}\!\left(
\frac{L M^{\kappa/p^*}R^\kappa}
{T^{2\kappa-\kappa/p^*-1}}
\right).
\]
Since $M=O_p(1)$ and
$2\kappa-\kappa/p^*-1=((p+1)\kappa-p)/p$, the claim follows.
\end{proof}

\begin{lemma}[Mirror-dual phase]
\label{lem:appendix_mirror_phase}
Let $\Delta=f(q_0)-f(x^*)$. The mirror dual of the accelerated primal
CFOM can be tuned so that its final iterate satisfies
\[
\|\nabla f(q_T)\|_{p^*}
=O_{p,\kappa}\!\left(
\frac{L^{1/\kappa}\Delta^{(\kappa-1)/\kappa}}
{T^{3/2+1/p-2/\kappa}}
\right)
\]
when $1<\kappa<2$. When $\kappa=2$, it satisfies
\[
\|\nabla f(q_T)\|_{p^*}
=O_p\!\left(
\frac{\sqrt{L\Delta}}{T^{1/2+1/p}}
\right).
\]
\end{lemma}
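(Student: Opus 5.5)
We prove the bound by running the dual method (the mirror dual of the accelerated primal CFOM used in \cref{lem:appendix_primal_warm_start}) and feeding the potential bound \cref{eq:mirror_potential_summary} into a two-parameter tradeoff. The plan has three steps.

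\textbf{Step 1: a dual potential inequality.} Starting from $V_0=\Delta/A_T$, we expand the time-reversed potential step by step, as in \cref{eq:appendix_dual_residual}. At each step we use convexity of $f$ and $\psi^\ast$, together with \cref{lem:inexact_cocoercivity} applied to $f$ (level $\delta_f$, constant $\widehat L$) and to $\psi^\ast$ (level $\delta_\psi$, constant $\widehat M$).

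\begin{itemize}
\item The cocoercivity slack for $\psi^\ast$ appears with unit weight at every step, giving $T\delta_\psi$.
\item The slack for $f$ appears with weight $A_t$ at step $t$. After normalizing by $A_T$ and time-reversing the coefficients, this becomes $\delta_f\sum_{t<T}1/A_t$, up to the bookkeeping in the reversed indexing.
\end{itemize}

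The leftover quadratic forms make up $\mathbf V_{\mathcal B}$. By \cref{thm:appendix_generalized_mirror_duality}, $\inf\mathbf V_{\mathcal B}=\inf\mathbf U_{\mathcal A}$, and $\mathbf U_{\mathcal A}\ge0$ was shown in the proof of \cref{lem:appendix_primal_warm_start} using \cref{eq:appendix_weight_psd}. Dropping $D_{\psi^\ast}(0,r_0)\ge0$ then gives
\[
\psi^\ast(r_T)\le \frac{\Delta}{A_T}+T\delta_\psi+\delta_f\sum_{t=1}^{T-1}\frac1{A_t}.
\]
Since $r_T=\nabla f(q_T)$ and $\psi^\ast(z)=\frac1{p^\ast}\|z\|_{p^\ast}^{p^\ast}$, we obtain $\|\nabla f(q_T)\|_{p^\ast}\le (p^\ast\cdot\mathrm{RHS})^{1/p^\ast}$.

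\textbf{Step 2: optimizing $\gamma_1,\gamma_2$.} Substitute $A_t=t(t+1)/(4\widehat L\widehat M)$, $\widehat L=L\gamma_2^{-2/\kappa}$, and $\widehat M=M\gamma_1^{-2/p^\ast}$, and use $\sum_{t\ge1}1/A_t\le 4\widehat L\widehat M$. The bound then reads
\[
\frac{4\widehat L\widehat M\Delta}{T^2}+T\,c_M\gamma_1^{2/(2-p^\ast)}+c_L\gamma_2^{2/(2-\kappa)}\cdot 4\widehat L\widehat M .
\]
Here $\widehat L\widehat M\propto\gamma_1^{-2/p^\ast}\gamma_2^{-2/\kappa}$.

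We optimize in nested fashion with \cref{lem:appendix_power_tradeoff}. First we fix $\gamma_1$ and optimize $\gamma_2$ in $\gamma_2^{-2/\kappa}(\Delta/T^2+\mathrm{const}\cdot\gamma_2^{2/(2-\kappa)})$, where the constant involves $L$. The two pieces balance when $\gamma_2^{2/(2-\kappa)}\asymp\Delta/(LT^2)$. This yields
\[
\widehat L\widehat M\cdot\frac{\Delta}{T^2}\asymp \widehat M\,L^{2/\kappa}\Bigl(\frac{\Delta}{T^2}\Bigr)^{2(\kappa-1)/\kappa}.
\]
Next we trade $\gamma_1^{-2/p^\ast}$ against $T\gamma_1^{2/(2-p^\ast)}$. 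Writing $E\defi L^{2/\kappa}(\Delta/T^2)^{2(\kappa-1)/\kappa}$, \cref{lem:appendix_power_tradeoff} gives
\[
\mathrm{RHS}\asymp E^{\,1-p^\ast/2}\,T^{-p^\ast/2}\cdot E^{\,p^\ast/2}\,T^{-?}.
\]
We only need to track exponents. Taking the $1/p^\ast$ root gives
\[
\|\nabla f\|_{p^\ast}\asymp \sqrt{E}\,T^{-(p-2)/(2p)}=L^{1/\kappa}\Delta^{(\kappa-1)/\kappa}\,T^{-2(\kappa-1)/\kappa-1/2+1/p}.
\]
We need to verify that the $T$ exponent is $3/2+1/p-2/\kappa$. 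The degenerate cases $p=2$ and $\kappa=2$ are handled as in \cref{lem:appendix_primal_warm_start} by dropping the corresponding tradeoff. For $\kappa=2$ this gives $\sqrt{L\Delta}\,T^{-1/2-1/p}$.

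\textbf{Main obstacle.} The delicate part is Step 1. We must confirm that under time reversal the inexactness terms really land with weights $T\delta_\psi$ and $\delta_f\sum 1/A_t$ rather than the primal weights $\delta_f\sum A_t$, since this is what makes $\delta_f$ affordable. We also must confirm that the duality identity of \cref{thm:appendix_generalized_mirror_duality} is unaffected by the extra constant $\delta$ terms, which sit outside the quadratic residual.

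I would first write the dual expansion explicitly for $\mathsf{MD}(\mathsf{AGD})$ and check, term by term, which potential weight multiplies each cocoercivity inequality. Everything after that is exponent arithmetic in the nested power tradeoffs.
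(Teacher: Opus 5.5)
Your route is the paper's. First, $\mathbf U_{\mathcal A}\ge 0$ together with \cref{thm:appendix_generalized_mirror_duality} and \cref{lem:inexact_cocoercivity} yields the master inequality $\psi^\ast(r_T)\le \Delta/A_T+T\delta_\psi+\delta_f\sum_{t<T}A_t^{-1}$. Gradient hitting then gives $r_T=\nabla f(q_T)$, and a nested power tradeoff is solved with $\gamma_2$ inside and $\gamma_1$ outside.

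Your Step 1 is at the same level of detail as the paper. The paper also just asserts that the slack terms land with weights $T$ and $\sum_t A_t^{-1}$, and that the duality identity is purely algebraic. Your inner $\gamma_2$-balance, $\gamma_2^{2/(2-\kappa)}\asymp\Delta/(LT^2)$ with value $\asymp\widehat M E$, is correct. The gap is in the outer $\gamma_1$-tradeoff, which is exactly where the claimed exponent is decided. You left it unfinished ($T^{-?}$), and the formula you then wrote down has the wrong sign.

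With $u=2/(2-p^\ast)$ and $v=2/p^\ast$, one has $v/(u+v)=1-p^\ast/2$ and $u/(u+v)=p^\ast/2$. The coefficient of $\gamma_1^{u}$ is $C_\psi\asymp MT$; it comes from $T\delta_\psi$, not from $E$. Hence \cref{lem:appendix_power_tradeoff} gives
\[
\psi^\ast(r_T)\lesssim (MT)^{1-p^\ast/2}(ME)^{p^\ast/2}=M\,T^{1-p^\ast/2}E^{p^\ast/2},
\qquad
\|\nabla f(q_T)\|_{p^\ast}\lesssim M^{1/p^\ast}\sqrt{E}\,T^{1/p^\ast-1/2}=M^{1/p^\ast}\sqrt{E}\,T^{+(p-2)/(2p)}.
\]
With $\sqrt E=L^{1/\kappa}\Delta^{(\kappa-1)/\kappa}T^{-2(\kappa-1)/\kappa}$, the $T$-exponent is $-2+2/\kappa+1/2-1/p=-(3/2+1/p-2/\kappa)$, as claimed.

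Your factor $T^{-(p-2)/(2p)}$ instead produces $T^{-(5/2-1/p-2/\kappa)}$. This is too optimistic by a factor $T^{1-2/p}$ for $p>2$. At $\kappa=2$ it gives $T^{-3/2+1/p}$, which contradicts your own (correct) $\kappa=2$ conclusion $T^{-1/2-1/p}$. A quick sanity check: $T\delta_\psi$ grows with $T$, so the lack of strong convexity of $\psi$ for $p>2$ must cost a growing factor $T^{1/2-1/p}$, not provide a gain.
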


\begin{proof}
The primal analysis above establishes
$\mathbf U_{\mathcal A}\geq0$. By
\cref{thm:appendix_generalized_mirror_duality}, the mirror-dual
residual satisfies $\mathbf V_{\mathcal B}\geq0$. Applying
\cref{lem:inexact_cocoercivity} to the remaining remainders gives
\begin{equation}
\label{eq:appendix_dual_master}
\psi^*(r_T)
\leq
\frac{\Delta}{A_T}
+T\delta_\psi
+\delta_f\sum_{t=1}^{T-1}\frac1{A_t}.
\end{equation}
By \cref{prop:appendix-gradient-hitting}, the final dual iterate is
\begin{equation}
\label{eq:appendix_gradient_hitting}
r_T=\nabla f(q_T).
\end{equation}

Denote the right-hand side of \eqref{eq:appendix_dual_master} by
\[
    h_{\mathrm D}(\gamma_1,\gamma_2)
    \defi
    \frac{\Delta}{A_T}
    +T\delta_\psi(\gamma_1)
    +\delta_f(\gamma_2)\sum_{t=1}^{T-1}\frac1{A_t}.
\]
Suppose first that $p>2$ and $1<\kappa<2$. Define
\[
\beta\defi\frac2\kappa,
\quad
\eta\defi\frac{4(\kappa-1)}{\kappa(2-\kappa)},
\quad
B_0\defi\frac{4LM\Delta}{T(T+1)},
\quad
C_0\defi\frac{2L^2M(2-\kappa)(T-1)}{\kappa T},
\]
and
\[
u\defi\frac{2}{2-p^*},
\qquad
v\defi\frac{2}{p^*},
\qquad
C_\psi\defi\frac{M(2-p^*)T}{2p^*}.
\]
Then
\[
 h_{\mathrm D}(\gamma_1,\gamma_2)
 =C_\psi\gamma_1^u
 +\gamma_1^{-v}
  \left(B_0\gamma_2^{-\beta}+C_0\gamma_2^\eta\right).
\]
The inner tradeoff is minimized at
\begin{equation}
\label{eq:appendix_dual_gamma2}
\gamma_{2,\mathrm D}
=
\left(\frac{\beta B_0}{\eta C_0}\right)^{1/(\beta+\eta)}.
\end{equation}
Let
\[
G_{\mathrm D}
\defi
B_0\gamma_{2,\mathrm D}^{-\beta}
+C_0\gamma_{2,\mathrm D}^{\eta}.
\]
The remaining tradeoff is minimized at
\begin{equation}
\label{eq:appendix_dual_gamma1}
\gamma_{1,\mathrm D}
=
\left(\frac{vG_{\mathrm D}}{uC_\psi}\right)^{1/(u+v)}.
\end{equation}
If $p=2$, the $\gamma_1$ tradeoff disappears. If $\kappa=2$, the
$\gamma_2$ tradeoff disappears and we set
$G_{\mathrm D}=4LM\Delta/[T(T+1)]$. If both are equal to two, neither
parameter is needed. Substitution gives
\[
\psi^*(r_T)
=O_{p,\kappa}\!\left[
\left(
\frac{L^{1/\kappa}M^{1/p^*}\Delta^{(\kappa-1)/\kappa}}
{T^{3/2+1/p-2/\kappa}}
\right)^{p^*}
\right]
\]
for $1<\kappa<2$, and
\[
\psi^*(r_T)
=O_p\!\left[
\left(
\frac{\sqrt{L\Delta}\,M^{1/p^*}}
{T^{1/2+1/p}}
\right)^{p^*}
\right]
\]
when $\kappa=2$. Since
$\psi^*(r_T)=\|r_T\|_{p^*}^{p^*}/p^*$, $M=O_p(1)$, and
\cref{eq:appendix_gradient_hitting} holds, these are the stated rates.
\end{proof}

\begin{proof}[\linkofproof{thm:mirror_final_rate}]
The corresponding geometry in \cref{alg:two-phase-mirror-dual} is the
    \(\ell_{p}\) geometry for $p \in (1, \infty)$, but we can use \cref{rem:endpoint-norms}.

Run the primal phase of \cref{lem:appendix_primal_warm_start} for $T$
iterations and initialize the mirror-dual phase of
\cref{lem:appendix_mirror_phase} at its output. In the smooth case,
\[
\Delta
=O_p\!\left(\frac{LR^2}{T^{(p+2)/p}}\right),
\]
and therefore
\[
\|\nabla f(q_T)\|_{p^*}
=O_p\!\left(\frac{LR}{T^{(p+2)/p}}\right).
\]
Choosing
$T=O_p((LR/\epsilon)^{p/(p+2)})$ proves the first claim.

For $1<\kappa<2$, the same substitution gives
\begin{align*}
\|\nabla f(q_T)\|_{p^*}
&=O_{p,\kappa}\!\left(
\frac{LR^{\kappa-1}}{T^\Gamma}
\right),\\
\Gamma
&=\left(\frac32+\frac1p-\frac2\kappa\right)
+\frac{\kappa-1}{\kappa}
\left(\frac{(p+1)\kappa-p}{p}\right)\\
&=\frac{\kappa p+\kappa-(\frac12+\frac1\kappa)p}{p}
=\frac{D_{p,\kappa}}{p}.
\end{align*}
When $D_{p,\kappa}>0$, choosing
\[
T=O_{p,\kappa}\!\left(
\left(\frac{LR^{\kappa-1}}{\epsilon}\right)^{p/D_{p,\kappa}}
\right)
\]
proves the second claim. Each phase uses $T$ gradient evaluations, so
the factor of two is absorbed by the $O_{p,\kappa}$ notation.
\end{proof}

\begin{remark}\label{prop:rate-comparison}
We compare the exponents of $LR^{\kappa-1} / \epsilon$ in the expressions of the lower bound $a_{\rm L}$, of the upper bound from \citet{diakonikolas2024complementary} $a_{\rm P}$ and of the upper bound from our algorithm $a_{\rm O}$, all under \cref{ass:holder-setting}, that is, the Hölder smoothness assumption. These exponents are:
\begin{align*}
    a_{\rm L}= \frac{p}{\kappa p+\kappa-p}, \quad
    a_{\rm P}=\frac{\kappa(p-1)}{(\kappa-1)(\kappa p+\kappa-p)}, \quad
    a_{\rm O}= \frac{p}{\kappa p+\kappa-(\frac12+\frac1\kappa)p},
\end{align*}
One can observe that our rate does not violate the lower bound since $\frac{1}{2}+\frac{1}{\kappa} \geq 1$ due to $\kappa \in (1, 2)$. Also, our rates are non-vacuous whenever the denominator of $a_{\rm O}$ is positive. This occurs for all $p \geq 2$ for all $\kappa \geq \kappa_\ast$, with $\kappa_\ast=\frac{1+\sqrt{17}}{4}\approx1.28$ as a solution of a simple quadratic equation. If $\kappa<\kappa_\ast$ then we have a positive denominator if 
$
  p<\frac{2\kappa^2}{\kappa+2-2\kappa^2}.
$

Finally, we compare against \citet{diakonikolas2024complementary}. The inequality $a_{\rm O}<a_{\rm P}$ is equivalent to the quadratic inequality
$
    (3\kappa-4)p^2+(-2\kappa^2+3\kappa+2)p-2\kappa^2>0.
$
If $\frac43\leq\kappa\le 2$, then it is a convex quadratic which is negative at $p=0$ and positive at $p=2$ and thus, it is positive for all $p\geq 2$. In other words, our rates are better in this regime.
If $1<\kappa<\frac43$, the inequality is a concave quadratic and our rates are better between its roots $p_{\pm}(\kappa)$ (closed-form in the appendix), intersected with the positive denominator condition. Since $p_{-}(\kappa)\in(0,2)$, we are better for moderate $p$ from $2$ to $p_{+}(\kappa)$.
\end{remark}

\section{Additional Algorithms}\label{sec:additional_algorithms}

\subsection{Mirror-Dual Algorithms}

We first record the accelerated primal method used for the warm start.
The regularizer is centered at the supplied initial point. The proof of
\cref{lem:appendix_primal_warm_start} specifies the weights.

\begin{algorithm}[H]
\caption{Accelerated primal warm start}
\label{alg:primal-warm-start}
\footnotesize
\begin{algorithmic}[1]
\Require $f$, $\psi$, $x_0$, horizon $T$, weights $\{a_t\}_{t=1}^T$
\State $A_0\gets0$, $\widetilde z_0\gets\argmin_z\psi(z)$,
       $\widetilde y_0\gets\widetilde z_0$
\For{$t=1,\ldots,T$}
  \State $A_t\gets A_{t-1}+a_t$
  \State $\displaystyle
    \widetilde x_t\gets
    \frac{A_{t-1}}{A_t}\widetilde y_{t-1}
    +\frac{a_t}{A_t}\widetilde z_{t-1}$
  \State $\displaystyle
    \widetilde z_t\gets
    \argmin_z\left\{
      \sum_{i=1}^t a_i\innp{\nabla f(\widetilde x_i),z}+\psi(z)
    \right\}$
  \State $\displaystyle
    \widetilde y_t\gets
    \frac{A_{t-1}}{A_t}\widetilde y_{t-1}
    +\frac{a_t}{A_t}\widetilde z_t$
\EndFor
\State \Return $\widetilde y_T$
\end{algorithmic}
\end{algorithm}

We next make the coefficient arrays used by the mirror-dual construction
explicit. Define
\[
    v_t\defi-\sum_{i=1}^t a_i\nabla f(\widetilde x_i),
    \qquad v_0=0,
    \qquad z_t\defi\nabla\psi^*(v_t).
\]
Then the minimization defining $\widetilde z_t$ in
\cref{alg:primal-warm-start} gives $\widetilde z_t=z_t$.

\begin{lemma}[AGD iterate identities]
\label{lem:appendix-agd-identities}
For every $t\geq1$,
\[
    \widetilde y_t=\frac1{A_t}\sum_{i=1}^t a_i z_i.
\]
Moreover, $\widetilde x_1=z_0$, and for every $t\geq2$,
\[
    \widetilde x_t
    =\frac1{A_t}\sum_{i=1}^{t-2}a_i z_i
    +\frac{a_{t-1}+a_t}{A_t}z_{t-1}.
\]
\end{lemma}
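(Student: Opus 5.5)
We prove both identities by induction on $t$, using only the recursions in \cref{alg:primal-warm-start}, the relation $A_t=A_{t-1}+a_t$ with $A_0=0$, and the identification $\widetilde z_t=z_t$ recorded just before the lemma. Note that $v_0=0$ gives $z_0=\nabla\psi^*(0)=\argmin_z\psi(z)=\widetilde z_0$. This holds because $\psi$ is uniformly convex and hence has a unique minimizer. So $\widetilde z_t=z_t$ for every $t\ge 0$, including $t=0$, and $\widetilde y_0=z_0$.

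\textbf{The $\widetilde y_t$ identity.} For the base case $t=1$, $A_0=0$ gives $\widetilde y_1=\frac{a_1}{A_1}\widetilde z_1=\frac{a_1}{A_1}z_1$. For the inductive step, assume $\widetilde y_{t-1}=A_{t-1}^{-1}\sum_{i=1}^{t-1}a_iz_i$ for some $t\ge2$. Substituting into the update for $\widetilde y_t$, the factor $A_{t-1}$ cancels:
\[
\widetilde y_t=\frac{1}{A_t}\sum_{i=1}^{t-1}a_iz_i+\frac{a_t}{A_t}z_t .
\]
This is the claim.

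\textbf{The $\widetilde x_t$ identity.} For $t=1$, $A_0=0$ and $a_1=A_1$ give $\widetilde x_1=\widetilde z_0=z_0$. For $t\ge2$, apply the first identity at index $t-1$, which is valid since $t-1\ge1$. Then
\[
\frac{A_{t-1}}{A_t}\widetilde y_{t-1}=\frac1{A_t}\sum_{i=1}^{t-1}a_iz_i ,
\]
and adding $\frac{a_t}{A_t}z_{t-1}$ gives
\[
\widetilde x_t=\frac1{A_t}\sum_{i=1}^{t-2}a_iz_i+\frac{a_{t-1}+a_t}{A_t}z_{t-1},
\]
after separating the $i=t-1$ term from the sum. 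When $t=2$ the sum over $i\le t-2$ is empty, and the formula reads $\widetilde x_2=\frac{a_1+a_2}{A_2}z_1=z_1$, which is consistent.

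There is no real obstacle here. The only point needing care is the index bookkeeping at the boundary: the $\widetilde x$ identity must use the $\widetilde y$ identity at $t-1\ge1$, so the case $t=1$ has to be handled separately through $\widetilde y_0=\widetilde z_0=z_0$.
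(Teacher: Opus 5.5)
Your proof is correct and follows the paper's argument: induction on the $\widetilde y_t$ update, then substituting the identity at index $t-1$ into the $\widetilde x_t$ update. You also spell out the boundary cases the paper leaves implicit ($\widetilde x_1=z_0$ via $A_0=0$, and $\widetilde z_0=\nabla\psi^*(0)=z_0$), though strictly $\widetilde x_1$ uses only $\widetilde z_0=z_0$, because $A_0=0$ kills the $\widetilde y_0$ term.
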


\begin{proof}
The first identity follows by induction from the update for
$\widetilde y_t$. Substituting the identity at time $t-1$ into the
update for $\widetilde x_t$ gives the second identity.
\end{proof}

\begin{lemma}[Explicit CFOM coefficients]
\label{lem:appendix-agd-cfom-coefficients}
The iterates of \cref{alg:primal-warm-start} satisfy
\[
    v_t=v_{t-1}-a_t\nabla f(\widetilde x_t),
    \qquad t=1,\ldots,T,
\]
and
\[
    \widetilde x_{t+1}
    =\widetilde x_t-\sum_{i=0}^t b_{t+1,i}z_i,
    \qquad t=1,\ldots,T-1,
\]
where the first row is
\[
    b_{2,0}=1,
    \qquad b_{2,1}=-1,
\]
and, for $t\geq2$,
\[
    b_{t+1,i}
    =
    \begin{cases}
        \displaystyle
        \frac{a_i}{A_t}-\frac{a_i}{A_{t+1}},
        &1\leq i\leq t-2,\\[1.2ex]
        \displaystyle
        \frac{a_{t-1}+a_t}{A_t}
        -\frac{a_{t-1}}{A_{t+1}},
        &i=t-1,\\[1.2ex]
        \displaystyle
        -\frac{a_t+a_{t+1}}{A_{t+1}},
        &i=t,\\[1.2ex]
        0,&\text{otherwise}.
    \end{cases}
\]
Thus the gradient coefficient matrix has exactly one nonzero entry in
row $t$, equal to $a_t$, and the regularizer-oracle coefficient matrix
is given entrywise by the displayed $b_{t+1,i}$.
\end{lemma}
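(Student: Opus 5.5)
The plan is to read off both recursions directly from the closed-form iterate expressions in \cref{lem:appendix-agd-identities}. The only work is matching coefficients of $z_0,\dots,z_t$.

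\textbf{Dual recursion.} The gradient recursion is immediate from the definition $v_t=-\sum_{i=1}^t a_i\nabla f(\widetilde x_i)$. Subtracting the expressions for $v_t$ and $v_{t-1}$ gives $v_t=v_{t-1}-a_t\nabla f(\widetilde x_t)$. Hence the gradient coefficient array has the single nonzero entry $a_t$ in row $t$.

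\textbf{Oracle identification.} I will also record why the $z_i$ are the $\nabla\psi^*$-oracle values, so that the $b_{t+1,i}$ really are regularizer-oracle coefficients. The first-order optimality condition of the $\argmin$ defining $\widetilde z_t$ is $\nabla\psi(\widetilde z_t)=v_t$. Since $\nabla\psi^*=(\nabla\psi)^{-1}$ (as in the proof of \cref{prop:conjugate_holder}), this gives $\widetilde z_t=\nabla\psi^*(v_t)=z_t$. In particular $\widetilde z_0=\argmin\psi=\nabla\psi^*(0)=z_0$.

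\textbf{First row.} Because $A_0=0$, the update gives $\widetilde x_1=\widetilde z_0=z_0$. For $t=2$, the identity of \cref{lem:appendix-agd-identities} has an empty sum, and $A_2=a_1+a_2$, so $\widetilde x_2=z_1$. Therefore $\widetilde x_2=\widetilde x_1-(z_0-z_1)$, which yields $b_{2,0}=1$ and $b_{2,1}=-1$.

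\textbf{Rows $t\ge2$.} I write both $\widetilde x_{t+1}$ and $\widetilde x_t$ using \cref{lem:appendix-agd-identities}:
\[
\widetilde x_{t+1}=\frac{1}{A_{t+1}}\sum_{i=1}^{t-1}a_iz_i+\frac{a_t+a_{t+1}}{A_{t+1}}z_t,
\qquad
\widetilde x_t=\frac{1}{A_t}\sum_{i=1}^{t-2}a_iz_i+\frac{a_{t-1}+a_t}{A_t}z_{t-1}.
\]
I then compute $\widetilde x_t-\widetilde x_{t+1}$ coefficient by coefficient.
\begin{itemize}
\item For $1\le i\le t-2$, $z_i$ appears in both sums and contributes $a_i/A_t-a_i/A_{t+1}$.
\item For $i=t-1$, $z_{t-1}$ appears with weight $(a_{t-1}+a_t)/A_t$ in $\widetilde x_t$ and with weight $a_{t-1}/A_{t+1}$ (the last term of the first sum) in $\widetilde x_{t+1}$.
\item For $i=t$, $z_t$ appears only in $\widetilde x_{t+1}$, contributing $-(a_t+a_{t+1})/A_{t+1}$.
\item $z_0$ and all $z_i$ with $i>t$ do not appear, so those coefficients are zero.
\end{itemize}
This is exactly $\sum_i b_{t+1,i}z_i$ with the stated $b_{t+1,i}$, which gives $\widetilde x_{t+1}=\widetilde x_t-\sum_{i=0}^t b_{t+1,i}z_i$.

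\textbf{Main obstacle.} The only delicate point is boundary bookkeeping, not any analytic estimate. One must check that $t=2$ is consistent with the general formula: the range $1\le i\le t-2$ is then empty, and the $i=t-1=1$ entry uses the first sum of $\widetilde x_3$, which contains only $i=1$. One must also check that $z_0$ never reappears for $t\ge2$, and that $\widetilde x_1$, which uses $A_0=0$, is handled separately as the first row.
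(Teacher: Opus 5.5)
Your proposal is correct and matches the paper's proof. The $v_t$ recursion comes from the definition, the first row from $\widetilde x_1=z_0$ and $\widetilde x_2=z_1$, and the rows $t\ge2$ from subtracting the two closed forms in \cref{lem:appendix-agd-identities} and matching coefficients. Your extra points, namely the identification $\widetilde z_t=\nabla\psi^*(v_t)$ (which the paper states just before the lemma) and the $t=2$ boundary check, are accurate and only make the bookkeeping more explicit.
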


\begin{proof}
The recursion for $v_t$ follows directly from its definition. For the
first primal row, $\widetilde y_1=z_1$ and hence
$\widetilde x_2-\widetilde x_1=z_1-z_0$. For $t\geq2$,
\cref{lem:appendix-agd-identities} gives
\begin{align*}
\widetilde x_{t+1}
&=\frac1{A_{t+1}}\sum_{i=1}^{t-1}a_i z_i
  +\frac{a_t+a_{t+1}}{A_{t+1}}z_t,\\
\widetilde x_t
&=\frac1{A_t}\sum_{i=1}^{t-2}a_i z_i
  +\frac{a_{t-1}+a_t}{A_t}z_{t-1}.
\end{align*}
Subtracting the second identity from the first and reversing the signs
of the resulting coefficients gives the stated $b_{t+1,i}$.
\end{proof}

\begin{lemma}[Zero-sum property]
\label{lem:appendix-agd-zero-sum}
For every $t=1,\ldots,T-1$, the coefficients in
\cref{lem:appendix-agd-cfom-coefficients} satisfy
\[
    \sum_{i=0}^t b_{t+1,i}=0.
\]
\end{lemma}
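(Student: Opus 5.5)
We check the identity row by row, summing the explicit coefficients from \cref{lem:appendix-agd-cfom-coefficients}. The zero-sum property is then just the statement that both $\widetilde x_{t+1}$ and $\widetilde x_t$ are affine (not merely linear) combinations of the $z_i$, so their difference has coefficients summing to zero.

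For the first row ($t=1$) there is nothing to do: $b_{2,0}+b_{2,1}=1-1=0$.

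For $t\geq2$ we add the three nonzero cases and group the terms by denominator. The terms with denominator $A_t$ are
\[
\frac1{A_t}\Bigl(\sum_{i=1}^{t-2}a_i+a_{t-1}+a_t\Bigr)=\frac{A_t}{A_t}=1,
\]
which uses $A_t=\sum_{i=1}^t a_i$ from \cref{eq:appendix_mirror_weights} (or simply $A_t=A_{t-1}+a_t$ with $A_0=0$, as in \cref{alg:primal-warm-start}). The terms with denominator $A_{t+1}$ are
\[
-\frac1{A_{t+1}}\Bigl(\sum_{i=1}^{t-2}a_i+a_{t-1}+a_t+a_{t+1}\Bigr)=-\frac{A_{t+1}}{A_{t+1}}=-1.
\]
Adding the two groups gives $\sum_{i=0}^t b_{t+1,i}=0$. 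The index $i=0$ contributes nothing, since $b_{t+1,0}=0$ for $t\geq2$.

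An equivalent conceptual route is to read off from \cref{lem:appendix-agd-identities} that the coefficients of $\widetilde x_t$ on the $z_i$ sum to $(\sum_{i\le t}a_i)/A_t=1$ for every $t\geq1$, with $\widetilde x_1=z_0$ having coefficient $1$. Differencing two such affine combinations gives coefficients summing to zero. There is no real obstacle here. The only care needed is the edge case $t=2$, where the range $1\le i\le t-2$ is empty. The grouping still works because the $i=t-1$ and $i=t$ entries already carry $a_1$ and $a_2$, which make up $A_2$ and, together with $a_3$, $A_3$.
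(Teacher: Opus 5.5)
Your proof is correct and is essentially the paper's own argument: the $t=1$ row is checked directly, and for $t\geq2$ the explicit coefficients are grouped by denominator, with the $A_t$-terms summing to $1$ and the $A_{t+1}$-terms to $-1$. Your remark that this holds because each $\widetilde x_t$ is an affine combination of the $z_i$ is a correct gloss, but the proof does not need it.
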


\begin{proof}
For $t=1$, the claim follows from $b_{2,0}+b_{2,1}=1-1=0$.
For $t\geq2$, the explicit coefficients give
\begin{align*}
\sum_{i=0}^t b_{t+1,i}
&=
\frac{\sum_{i=1}^{t-2}a_i+a_{t-1}+a_t}{A_t}
-
\frac{\sum_{i=1}^{t-2}a_i+a_{t-1}+a_t+a_{t+1}}{A_{t+1}}\\
&=\frac{A_t}{A_t}-\frac{A_{t+1}}{A_{t+1}}=0.
\end{align*}
\end{proof}

To state the mirror-dual method compactly, write the coupled
first-order representation of \cref{alg:primal-warm-start} as
\begin{align}
v_{t+1}
&=v_t-\sum_{j=0}^t\alpha_{t+1,j}\nabla f(x_j),
\nonumber\\
x_{t+1}
&=x_t-\sum_{j=0}^{t+1}\beta_{t+1,j}\nabla\psi^*(v_j).
\label{eq:agd-cfom-appendix}
\end{align}
After the harmless shift from the one-based indexing of
\cref{lem:appendix-agd-cfom-coefficients} to the zero-based convention
in \eqref{eq:agd-cfom-appendix}, the only nonzero entries of $\alpha$
are the entries $a_t$ displayed above, and the entries of $\beta$ are
exactly the coefficients $b_{t+1,i}$. Hence the arrays depend only on
$\{a_t,A_t\}$ and are now fully specified.

\begin{proposition}[Gradient hitting]
\label{prop:appendix-gradient-hitting}
Let $(q_t,r_t)_{t=0}^T$ be the mirror-dual iterates obtained by
time-reversing the coefficient arrays in
\eqref{eq:agd-cfom-appendix}. Then
\[
    r_T=\nabla f(q_T),
    \qquad\text{and hence}\qquad
    \psi^*(r_T)=\psi^*(\nabla f(q_T)).
\]
\end{proposition}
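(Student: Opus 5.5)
The plan is to prove the identity purely algebraically, from the explicit coefficient arrays of \cref{lem:appendix-agd-cfom-coefficients} and the zero-sum property of \cref{lem:appendix-agd-zero-sum}. No smoothness or convexity should enter, which matches the algebraic nature of \cref{thm:appendix_generalized_mirror_duality}.

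First I will write the mirror dual $\mathcal B$ of \eqref{eq:agd-cfom-appendix} explicitly, following the construction of \citet{kim2023mirror}. Time reversal transposes the two coefficient arrays and reindexes them through $k\mapsto T-1-k$, and it swaps the oracle maps. In the dual, the update of the variable $r$ is therefore driven by the oracle $\nabla f(q_j)$ with the reversed $\beta$-array. The update of $q$ is driven by $\nabla\psi^\ast(r_j)$ with the reversed $\alpha$-array, which has a single nonzero entry $a_{T-k}$ per row. Unrolling the $r$-recursion from its initialization then writes $r_T$ as a finite linear combination
\[
r_T=\sum_{j=0}^{T} c_j\,\nabla f(q_j),
\]
where each $c_j$ is an explicit signed sum of entries $b_{t+1,i}$. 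The initialization $r_0$ is absorbed here, as in \citet{kim2023mirror}, either as $r_0=0$ or as a gradient term.

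The core step is to compute the $c_j$. After the transposition, the sum defining each $c_j$ with $j<T$ runs over exactly one original row $\{b_{t+1,i}\}_{i=0}^t$ of the primal array. Each such row sums to zero by \cref{lem:appendix-agd-zero-sum}, so $c_j=0$ for all $j<T$. The coefficient of the final gradient $\nabla f(q_T)$ should come from the reversal of the first primal row, namely $b_{2,0}=1$ and $b_{2,1}=-1$. With the sign convention of the update ($r\leftarrow r-\sum\beta\,\nabla f$), the entry $-b_{2,1}=1$ lands on $\nabla f(q_T)$, giving $c_T=1$. Hence $r_T=\nabla f(q_T)$. The second identity, $\psi^\ast(r_T)=\psi^\ast(\nabla f(q_T))$, then follows by applying $\psi^\ast$ to both sides.

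The main obstacle is the index bookkeeping. Three conventions have to be matched exactly: the shift from the one-based indexing of \cref{lem:appendix-agd-cfom-coefficients} to the zero-based one in \eqref{eq:agd-cfom-appendix}, the precise reversal map of \citet{kim2023mirror}, and the treatment of $r_0$. I need to check that the reversed \emph{rows} of $\beta$, which become the dual's \emph{columns} for fixed $j$, are exactly the zero-sum sets. I also need to check that the boundary row $t=1$ produces the unit coefficient on $\nabla f(q_T)$ rather than on $\nabla f(q_{T-1})$. I would first verify the statement by hand for $T=2$ and $T=3$ to pin down the conventions, and then run the general telescoping argument.
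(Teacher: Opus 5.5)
You are right that unrolling the $r$-recursion and killing $c_j$, $j<T$, with zero row sums works. The gap is in $c_T$: the step that should give $c_T=1$ fails, and it contradicts the rest of your own bookkeeping.

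In the reversal of \cref{alg:mirror-dual-agd}, the coefficient of $\nabla f(q_i)$ in $r_T$ is $-\sum_{j=0}^{T-i}\beta_{T-i,j}$ for every $0\le i\le T$. For $i=0$ the last entry $\beta_{T,T}$ comes from $r_0=-\beta_{T,T}\nabla f(q_0)$. So $r_0$ is not a free choice: $r_0=0$ is consistent only when $\beta_{T,T}=0$. The $T$ rows $\beta_{1,\cdot},\dots,\beta_{T,\cdot}$ are therefore used up by $c_{T-1},\dots,c_0$. In particular, the first primal row $(\beta_{1,0},\beta_{1,1})=(b_{2,0},b_{2,1})=(1,-1)$ lands \emph{entirely} on $\nabla f(q_{T-1})$, where it cancels. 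A transposition cannot send $b_{2,0}$ to one dual gradient and $b_{2,1}$ to another. So this row cannot be one of your zero-sum rows and also deliver $-b_{2,1}=1$ to $\nabla f(q_T)$. The case $T=1$ shows this directly: $r_0=-\beta_{1,1}\nabla f(q_0)=\nabla f(q_0)$ and $r_1=r_0-\beta_{1,0}\nabla f(q_0)-\beta_{0,0}\nabla f(q_1)=-\beta_{0,0}\nabla f(q_1)$. The only term reaching $\nabla f(q_T)$ is the boundary entry $\beta_{0,0}$ in the last step $k=T-1$, and no row of \cref{lem:appendix-agd-cfom-coefficients} supplies it. A zero-sum property can only annihilate coefficients; it can never create the nonzero one.

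The missing ingredient is the primal initialization $x_0=\nabla\psi^\ast(y_0)$; for AGD this is $\widetilde x_1=\widetilde z_0=z_0$. It is exactly the leading $1$ in the condition $1-\sum_{i=0}^k\sum_{j=0}^{i+1}\beta_{i+1,j}=1$ that the paper verifies before invoking \citet[Proposition~1]{kim2023mirror}. That condition says each $x_{k+1}$ is an affine combination of the mirror points $\nabla\psi^\ast(y_j)$. Write the initialization as a row-zero entry, $x_0=-\beta_{0,0}\nabla\psi^\ast(y_0)$ with $\beta_{0,0}=-1$. Time reversal then turns it into the term $-\beta_{0,0}\nabla f(q_T)=\nabla f(q_T)$ in the final dual step.

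With this, your computation closes: $c_T=1$ and $c_j=0$ for $j<T$. The row $\beta_{T,\cdot}$ lies one beyond the range $t\le T-1$ of \cref{lem:appendix-agd-zero-sum}, but it sums to zero for the same affine-combination reason. Repaired this way, your direct unrolling is a self-contained substitute for the paper's appeal to Kim et al.'s proposition.
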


\begin{proof}
By \cref{lem:appendix-agd-zero-sum}, every row of the
regularizer-oracle coefficient matrix has zero sum. The same is true
for the zero-based array $\beta$ in \eqref{eq:agd-cfom-appendix}.
Consequently, for every $k=0,\ldots,T-1$,
\[
    1-
    \sum_{i=0}^k\sum_{j=0}^{i+1}\beta_{i+1,j}
    =1.
\]
This is the sufficient coefficient condition in
\citet[Proposition~1]{kim2023mirror}.
Applying that proposition to the time-reversed CFOM yields
$r_T=\nabla f(q_T)$.
\end{proof}

The time reversal of the now explicit arrays gives the following
method.

\begin{algorithm}[H]
\caption{Mirror dual of accelerated gradient descent}
\label{alg:mirror-dual-agd}
\footnotesize
\begin{algorithmic}[1]
\Require $f$, $\psi$, $q_0$, horizon $T$, CFOM arrays
         $(\alpha,\beta)$ from \eqref{eq:agd-cfom-appendix}
\State $r_0\gets-\beta_{T,T}\nabla f(q_0)$
       \Comment{$r_0=0$ for \cref{alg:primal-warm-start}}
\For{$k=0,\ldots,T-1$}
  \State $\displaystyle
    q_{k+1}\gets q_k-
    \sum_{i=0}^k\alpha_{T-i,T-1-k}\nabla\psi^*(r_i)$
  \State $\displaystyle
    r_{k+1}\gets r_k-
    \sum_{i=0}^{k+1}\beta_{T-i,T-1-k}\nabla f(q_i)$
\EndFor
\State \Return $q_T$
\end{algorithmic}
\end{algorithm}

The parameters of the two calls differ: the primal weights minimize the
warm-start function gap, whereas the mirror-dual weights minimize the
dual potential in \cref{eq:appendix_dual_master}.

\begin{algorithm}[H]
\caption{Two-phase inexact mirror duality}
\label{alg:two-phase-mirror-dual}
\footnotesize
\begin{algorithmic}[1]
\Require $f$, $x_0$, $p$, $\kappa$, $L$,
         $R\geq\|x_0-x^*\|_p$, horizon $T$
\State $\psi_0(x)\gets\frac1p\|x-x_0\|_p^p$
\State Form the primal weights $\{a_t^{\mathrm P}\}$ using
       \eqref{eq:appendix_primal_gamma1} and
       \eqref{eq:appendix_primal_gamma2}, omitting inactive parameters
\State $x^{\mathrm w}\gets
       \Call{Primal-AGD}{f,\psi_0,x_0,T,\{a_t^{\mathrm P}\}}$
       \Comment{\cref{alg:primal-warm-start}}
\State $\psi(x)\gets\frac1p\|x\|_p^p$
\State Set $\overline\Delta$ to the warm-start bound from
       \cref{lem:appendix_primal_warm_start}
\State Form the dual weights $\{a_t^{\mathrm D}\}$ using
       \eqref{eq:appendix_dual_gamma2} and
       \eqref{eq:appendix_dual_gamma1}, with
       $\Delta\gets\overline\Delta$ and inactive parameters omitted.
       Form $(\alpha,\beta)$
\State $q\gets
       \Call{Mirror-Dual-AGD}{f,\psi,x^{\mathrm w},T,\alpha,\beta}$
       \Comment{\cref{alg:mirror-dual-agd}}
\State \Return $q$
\end{algorithmic}
\end{algorithm}

\section{Accumulative Power Regularization}
\label{app:accumulative-power}
We will also use the gradient form of H\"older smoothness. Let \(L_0\)
denote the constant in
\cref{def:uniform-convexity-holder-smoothness}. The generalized
cocoercivity inequality for convex H\"older-smooth functions
\citep{zalinescu2002convex} gives
\[
    D_f(x,y)
    \geq
    \frac{1}{\kappa^*}
    L_0^{-1/(\kappa-1)}
    \|\grad f(x)-\grad f(y)\|_*^{\kappa^*},
    \qquad
    \kappa^*=\frac{\kappa}{\kappa-1}.
\]
Combining this with
\(D_f(x,y)\leq (L_0/\kappa)\|x-y\|^\kappa\) yields
\[
    \|\grad f(x)-\grad f(y)\|_*
    \leq
    c_\kappa L_0\|x-y\|^{\kappa-1},
    \qquad
    c_\kappa=(\kappa-1)^{-(\kappa-1)/\kappa}.
\]
Since \(c_\kappa\geq1\) depends only on \(\kappa\), we set
\(L:=c_\kappa L_0\), which is still a valid H\"older-smoothness
constant and changes only constants depending on \(\kappa\).
Thus, with the convention used in
\cref{sec:accumulative_regularization},
\begin{equation}
\label{eq:gradient-holder}
    \|\grad f(x)-\grad f(y)\|_*
    \leq
    L\|x-y\|^{\kappa-1}.
\end{equation}

\begin{algorithm}[H]
    \caption{$\AGD+(x_0, \delta, R_0, f,\Psi,\phi)$ from \citep{diakonikolas2024complementary}}
\label{alg:generalized-agd}
\small
\algrenewcommand\algorithmicindent{1em}
\begin{algorithmic}[1]

\Require Initial point $x_0$, accuracy $\delta$, initial distance $R$ to a minimizer, $(L,\kappa)$-Hölder smooth function f, convex proximable function $\Psi$, regularizer $\phi$.
\State Set the following parameters as specified in \citep{diakonikolas2024complementary}: $\{(\alpha_j,c_j)\}_{j=1}^m$, $\bar y$, $\{a_k\}_{k=0}^N$, $m_0$ and number of iterations $N = C_{p,\kappa}\!\left( 1+ \left( \frac{L} {\lambda R^{q-\kappa}} \right)^{q/((q+1)\kappa-q)} \right)$, where $\lambda$ is the uniform-convexity constant of $H = f + \Psi$.

\vspace{0.1cm}
\hrule
\vspace{0.1cm}

\State $\Psi \gets \sum_{j=1}^m \alpha_j r_q(\cdot-c_j)$,
\quad
$\phi \gets \mu_{p,q}^{-1} r_q(\cdot-\bar y)$

\State $\Prox(g,A) \gets \arg\min_{y\in\R^d}
\bigl\{\langle g,y\rangle + A\Psi(y) + m_0\phi(y)\bigr\}$

\State $A_0 \gets a_0 \gets 1$, \quad $x_0 \gets \bar y$, \quad $g_0 \gets \nabla f(x_0)$
\State $v_0 \gets \Prox(g_0,A_0)$, \quad $y_0 \gets v_0$

\For{$k=1,\dots,N$}
    \State $A_k \gets A_{k-1}+a_k$
    \State $x_k \gets \bigl(A_{k-1}y_{k-1}+a_k v_{k-1}\bigr)/A_k$
    \State $g_k \gets g_{k-1}+a_k \nabla f(x_k)$
    \State $v_k \gets \Prox(g_k,A_k)$
    \State $y_k \gets \bigl(A_{k-1}y_{k-1}+a_k v_k\bigr)/A_k$
\EndFor

\State \Return $y_N$

\end{algorithmic}
\end{algorithm}

\begin{proof}[\linkofproof{thm:accumulative-power}]
First consider the endpoint cases. If \(p=1\), set
\(\widehat p=1+1/\log d\). By \cref{rem:endpoint-norms},
\(\|x\|_{\widehat p}\le\|x\|_1\le e\|x\|_{\widehat p}\), so
\((L,\kappa)\)-H\"older smoothness in \(\ell_1\) implies
\((e^\kappa L,\kappa)\)-H\"older smoothness in
\(\ell_{\widehat p}\), the \(\ell_{\widehat p}\)-radius is bounded by
the \(\ell_1\)-radius, and
\(\|g\|_\infty\le\|g\|_{\widehat p^\ast}\). Thus the finite
\(\widehat p\) result below gives the desired \(p=1\) guarantee after
absorbing universal constants.

    If \(p=\infty\), we can set \(\widehat p=\log d\) according to \cref{rem:endpoint-norms}. We may therefore assume
\(1<p<\infty\) for the rest of the proof.

We use the stage notation of \cref{alg:holder-ar}. Thus
\[
S\defi
\left\lceil
\frac{1}{\kappa-1}
\log_+\!\left(\frac{2LR_0^{\kappa-1}}{\eps}\right)
\right\rceil,\qquad
q\defi\max\{2,p\},\qquad
r_q(u)\defi\frac{1}{q}\pnorm{u}^q.
\]
We also write
\[
\tau\defi\frac{(q+1)\kappa-q}{q}.
\]
For \(1\le s\le S\), the algorithm sets
\[
R_s\defi R_0 2^{-s},\qquad
\sigma_0\defi0,\qquad
\sigma_1\defi
\frac{(1-2^{-(\kappa-1)/2})\eps}
{2\cdot3^{q-1}R_0^{q-1}},
\]
\[
\sigma_s\defi\sigma_1 2^{(s-1)(q-(\kappa+1)/2)},\qquad
\alpha_s\defi\sigma_s-\sigma_{s-1},\qquad
\Psi_s(x)\defi\sum_{i=1}^s\alpha_i r_q(x-x_{i-1}).
\]
We also set
\[
H_s(x)\defi f(x)+\Psi_s(x),\qquad
\phi_s(x)\defi\mu_{p,q}^{-1}r_q(x-x_{s-1}),\qquad
\delta_s\defi\frac{\mu_{p,q}\sigma_sR_s^q}{q},\qquad
\lambda_s\defi\mu_{p,q}\sigma_s,
\]
where \(\mu_{p,q}\) is the constant in \cref{fact:power_geometry}. The
points \(x_s\) are the outer iterates generated by \cref{alg:holder-ar},
and \(\lambda_s\) is the uniform-convexity constant of \(H_s\). Finally,
define
\[
    \beta\defi 2^{q-(\kappa+1)/2},
    \qquad
    \rho\defi\frac{\beta}{2^{q-1}}=2^{-(\kappa-1)/2}<1,
\]
so that \(\sigma_s=\sigma_1\beta^{s-1}\), and let
\(x_s^\ast=\argmin_x H_s(x)\) for \(1\le s\le S\).
Let \(N_s\) denote the value of \(N\) in the stage-\(s\) call to
\(\AGDp\) in \cref{alg:holder-ar}. By \cref{alg:generalized-agd}, that
call uses \(N_s+1\) gradient evaluations.

If \(2LR_0^{\kappa-1}\leq\eps\), then \(S=0\) in
\cref{alg:holder-ar}, so the algorithm returns \(x_0\). Moreover,
\eqref{eq:gradient-holder} and \(\grad f(x^\ast)=0\) give
\[
    \|\grad f(x_0)\|_{p^*}
    \leq
    L\|x_0-x^\ast\|_p^{\kappa-1}
    \leq LR_0^{\kappa-1}
    \leq \eps/2
    \leq \eps.
\]
Suppose henceforth that
$2LR_0^{\kappa-1}>\eps$.

We first bound the distance of the outer iterates with respect to the minimizers of the regularized problems.
Set $H_0=f$ and $x_0^\ast=x^\ast$. Recall
$H_s=H_{s-1}+(\alpha_s/q)\pnorm{\cdot-x_{s-1}}^q$. We prove by induction that
\begin{equation}\label{eq:iterate-localization}
  \pnorm{x_s-x_s^\ast}\le R_s,
  \qquad 0\le s\le S.
\end{equation}
The claim holds at $s=0$ because
$\pnorm{x_0-x^\ast}\le R_0$.  Suppose it holds at $s-1$. Then
\begin{align}\label{eq:center-localization}
 \begin{aligned}
  \frac{\alpha_s}{q}\pnorm{x_s^\ast-x_{s-1}}^q
  &=H_s(x_s^\ast)-H_{s-1}(x_s^\ast)
  \circled{1}[\le]
    H_s(x_{s-1}^\ast)-H_{s-1}(x_s^\ast)\\
  &=H_{s-1}(x_{s-1}^\ast)-H_{s-1}(x_s^\ast)
    +\frac{\alpha_s}{q}
      \pnorm{x_{s-1}^\ast-x_{s-1}}^q\\
  &\circled{2}[\le]
    \frac{\alpha_s}{q}\pnorm{x_{s-1}^\ast-x_{s-1}}^q \circled{3}[\le] \frac{\alpha_s}{q} R_{s-1}^q.
\end{aligned}   
\end{align}
Here $\circled{1}$ uses the optimality of $x_s^\ast$ for $H_s$,
    $\circled{2}$ uses the optimality of $x_{s-1}^\ast$ for $H_{s-1}$, and $\circled{3}$ uses the induction hypothesis. Moreover,
\[
  \frac{\lambda_s}{q}\pnorm{x_s-x_s^\ast}^q
  \circled{1}[\le]H_s(x_s)-H_s(x_s^\ast)
  \circled{2}[\le]\frac{\lambda_s}{q}R_s^q.
\]
Here $\circled{1}$ uses the $(\lambda_s,q)$-uniform convexity of the
accumulated regularizer from \cref{fact:power_geometry}, and
$\circled{2}$ applies \cref{fact:generalized-agd-inner} with
$\delta_s=\lambda_sR_s^q/q$ and the stage auxiliary regularizer $\phi_s(x)=(q\mu_{p,q})^{-1}\pnorm{x-x_{s-1}}^q$. This proves the induction claim.

Because \cref{eq:center-localization} holds at every stage $j$, for
$1\le i\le S$,
\begin{equation}\label{eq:accumulated-localization}
\begin{aligned}
  \pnorm{x_S^\ast-x_{i-1}}
  &\le \pnorm{x_i^\ast-x_{i-1}}
    +\sum_{j=i+1}^S\pnorm{x_j^\ast-x_{j-1}^\ast}\\
  &\le \pnorm{x_i^\ast-x_{i-1}}
    +\sum_{j=i+1}^S
      \left(
        \pnorm{x_j^\ast-x_{j-1}}
        +\pnorm{x_{j-1}-x_{j-1}^\ast}
      \right)\\
  &\circled{1}[\le]R_{i-1}+2\sum_{j=i+1}^S R_{j-1}
  \circled{2}[\le]3R_{i-1}.
\end{aligned}
\end{equation}
Here $\circled{1}$ uses
\cref{eq:iterate-localization,eq:center-localization}, while
$\circled{2}$ uses the geometric schedule $R_j=R_0 2^{-j}$.

We now show how we obtain the guarantee for the gradient norm.
The first-order optimality condition for $x_S^\ast$ is
\begin{equation}\label{eq:final-optimality}
  0=\grad f(x_S^\ast)
  +\sum_{i=1}^S\alpha_i\grad r_q(x_S^\ast-x_{i-1}).
\end{equation}
\[
\begin{aligned}
  \dnorm{\grad f(x_S)}
  &\le \dnorm{\grad f(x_S)-\grad f(x_S^\ast)}
    +\dnorm{\grad f(x_S^\ast)}\\
  &\circled{1}[\le]
  L\pnorm{x_S-x_S^\ast}^{\kappa-1}
  +\sum_{i=1}^S\alpha_i
    \dnorm{\grad r_q(x_S^\ast-x_{i-1})}\\
  &\circled{2}[\le]
  LR_S^{\kappa-1}
  +3^{q-1}
    \sum_{i=1}^S\alpha_iR_{i-1}^{q-1}\\
  &\circled{3}[\le]
  \frac{\eps}{2}
  +3^{q-1}\sigma_1R_0^{q-1}
    \sum_{i=1}^{\infty}
    \left(\frac{\beta}{2^{q-1}}\right)^{i-1}\\
  &\circled{4}[=]
  \frac{\eps}{2}
  +\frac{3^{q-1}\sigma_1R_0^{q-1}}{1-\rho}
  \circled{5}[=]\eps.
\end{aligned}
\]
Here \(\circled{1}\) uses \eqref{eq:gradient-holder} and
\cref{eq:final-optimality}.  For $\circled{2}$,
\cref{eq:iterate-localization,eq:accumulated-localization} are combined
with the identity
$\dnorm{\grad r_q(u)}=\pnorm{u}^{q-1}$ from
\cref{fact:power_geometry}. This explains the factor
$3^{q-1}R_{i-1}^{q-1}$.  Next, $\circled{3}$ uses
$LR_S^{\kappa-1}\le \eps/2$,
$\alpha_i\le\sigma_i=\sigma_1\beta^{i-1}$, and
$R_{i-1}=R_0 2^{-(i-1)}$.  Finally, $\circled{4}$ uses
$\rho=\beta/2^{q-1}<1$, and $\circled{5}$ is the definition of
$\sigma_1$.

Finally, we compute the gradient complexity of the algorithm. Let \(T\)
be the total number of gradient evaluations.
Since
\[
  \sigma_sR_s^{q-\kappa}
  =
  2^{-(q-\kappa)}\sigma_1R_0^{q-\kappa}
  \left(\frac{\beta}{2^{q-\kappa}}\right)^{s-1},
  \qquad
  \frac{\beta}{2^{q-\kappa}}=2^{(\kappa-1)/2}>1,
\]
\cref{fact:generalized-agd-inner} and the definition of $N_s$ give
\[
  N_s
  =
  O_{p,\kappa}\!\left(
    1+A\vartheta^{s-1}
  \right),
  \qquad
  \vartheta=2^{-\frac{\kappa-1}{2\tau}}<1,
\]
where
\[
  A
  =O_{p,\kappa}\!\left(
    \left(
      \frac{L}{\mu_{p,q}\sigma_1R_0^{q-\kappa}}
    \right)^{1/\tau}
  \right)
  =O_{p,\kappa}\!\left(
    \left(
      \frac{LR_0^{\kappa-1}}{\eps}
    \right)^{1/\tau}
  \right).
\]
Summing the geometric schedule gives
\[
\begin{aligned}
  T
  &=\sum_{s=1}^S(N_s+1)
  \circled{1}[\le]
  O_{p,\kappa}\!\left(
    S+\frac{A}{1-\vartheta}
  \right)
  \circled{2}[\le]
  O_{p,\kappa}\!\left(
    \left(\frac{LR_0^{\kappa-1}}{\eps}\right)^{1/\tau}
  \right)
  =
  O_{p,\kappa}\!\left(
    \left(\frac{LR_0^{\kappa-1}}{\eps}\right)^{
      \frac{q}{(q+1)\kappa-q}}
  \right).
\end{aligned}
\]
Here $\circled{1}$ sums the geometric bound on $N_s$. For
$\circled{2}$, let
\(
X:=\frac{2LR_0^{\kappa-1}}{\eps} > 1
\)
\(
\log_2 X
=\frac{\ln X}{\ln 2}
\le
\frac{\tau}{\ln 2}\bigl(X^{1/\tau}-1\bigr).
\)
Therefore
\(
S
=
\left\lceil
\frac{\log_2 X}{\kappa-1}
\right\rceil
=
O_{p,\kappa}(X^{1/\tau})
\).
Moreover, $(1-\vartheta)^{-1}=O_{p,\kappa}(1)$, so the bound on $A$
gives $\circled{2}$. The final equality uses
$1/\tau=q/((q+1)\kappa-q)$.
By \cref{alg:generalized-agd}, each inner iteration makes one gradient
evaluation and one shifted-power call, so the same bound holds for both
resources. The exponent is $2/(3\kappa-2)$ for $1<p\le2$ and
$p/((p+1)\kappa-p)$ for $p\ge2$. The endpoint exponents are obtained by
replacing \(p\) with the finite proxy \(\widehat p\) from
\cref{rem:endpoint-norms}.
\end{proof}

\section{Examples and experiments}\label{sec:regression_problems}

We now specify two problems that naturally are convex and smooth with respect to $p$-norms. For our experiments in \cref{fig:methods_comparison}, we sample data uniformly for the corresponding unit $p$-norm ball.

\subsection{Linear Regression in the \texorpdfstring{$p$}{p}-norm}
\[
C_p:=\max\left\{p-1,\frac{1}{p-1}\right\},
\qquad 
\|A\|_{p\to p}:=\sup_{\|x\|_p\le 1}\|Ax\|_p,
\qquad 
\frac1p+\frac{1}{p^\ast}=1 .
\]

\begin{proposition}[Smoothness of the squared residual in \(\ell_p\)]
\label{prop:linear-regression-smoothness}
Let \(1<p<\infty\), \(A\in \mathbb{R}^{m\times d}\), and \(b\in\mathbb{R}^m\). Define
\(
f(x):=\|Ax-b\|_p^2 .
\)
Then \(f\) is \(L\)-smooth with respect to \(\|\cdot\|_p\) with
\(
L \le 2C_p\|A\|_{p\to p}^2 .
\)
In particular, for \(p\ge 2\),
\(
L\le 2(p-1)\|A\|_{p\to p}^2 .
\)
\end{proposition}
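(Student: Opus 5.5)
We reduce to the smoothness of the squared $\ell_p$-norm $g(u)\defi\|u\|_p^2$ on $\R^m$ and then compose with the affine map $x\mapsto Ax-b$. The central fact to establish is that $g$ is $2C_p$-smooth with respect to $\|\cdot\|_p$:
\[
    D_g(u,v)\le C_p\|u-v\|_p^2 \quad\text{for all } u,v .
\]
Then, writing $u=Ax-b$ and $v=Ay-b$, the chain rule gives $\grad f(y)=A^\top\grad g(Ay-b)$. Hence
\[
    D_f(x,y)=D_g(Ax-b,Ay-b)\le C_p\|A(x-y)\|_p^2\le C_p\|A\|_{p\to p}^2\|x-y\|_p^2 .
\]
This is exactly $L$-smoothness with $L=2C_p\|A\|_{p\to p}^2$ in the sense of \cref{def:uniform-convexity-holder-smoothness} (with $\kappa=2$). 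The specialization to $p\ge2$ follows because $C_p=p-1$ there.

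For the smoothness of $g$, we split into two cases.

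\emph{Case $p\ge2$.} This is the classical fact that $\frac12\|\cdot\|_p^2$ is $(p-1)$-smooth in $\ell_p$, i.e.\ $\ell_p$ has modulus of smoothness of power type $2$ with constant $p-1$. To prove it directly:
\begin{itemize}[leftmargin=*,itemsep=1pt]
    \item Compute the Hessian of $h=\frac12\|\cdot\|_p^2$ away from coordinate hyperplanes (where it is $C^2$):
    \[
        \grad^2 h(u)[w,w]=(p-1)\|u\|_p^{2-p}\sum_i|u_i|^{p-2}w_i^2-(p-2)\|u\|_p^{2-2p}\Bigl(\sum_i|u_i|^{p-2}u_iw_i\Bigr)^2 .
    \]
    \item Drop the nonpositive second term.
    \item Apply Hölder with exponents $p/(p-2)$ and $p/2$: $\sum_i|u_i|^{p-2}w_i^2\le\|u\|_p^{p-2}\|w\|_p^2$. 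This gives $\grad^2h(u)[w,w]\le(p-1)\|w\|_p^2$.
    \item Integrate twice along the segment from $v$ to $u$ to get $D_h(u,v)\le\frac{p-1}{2}\|u-v\|_p^2$.
\end{itemize}
Non-smooth points form a null set, and $h$ is $C^1$, so a density or continuity argument handles them.

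\emph{Case $1<p<2$.} The Hessian bound fails near zero coordinates, since $|u_i|^{p-2}$ blows up. The key input is \cref{fact:power_geometry} together with duality. The conjugate of $\frac12\|\cdot\|_{p^\ast}^2$ is $\frac12\|\cdot\|_p^2$. Since $p^\ast>2$, the $p\ge2$ case applies to $\frac12\|\cdot\|_{p^\ast}^2$ and gives $(p^\ast-1)$-smoothness in $\ell_{p^\ast}$. We then invoke the standard smoothness/strong-convexity duality in its quadratic case (the same argument as \cref{prop:conjugate_holder}, run in the reverse direction): $K$-smoothness of a function implies $(1/K)$-strong convexity of its conjugate. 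This shows $\frac12\|\cdot\|_p^2$ is $\frac{1}{p^\ast-1}=(p-1)$-strongly convex, matching \cref{fact:power_geometry}. That is not what we need, however: we need an upper bound on $D_h$.

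The bound we need is instead $\frac{1}{p-1}$-smoothness of $\frac12\|\cdot\|_p^2$ for $p<2$. This follows from the duality the other way around. By \cref{fact:power_geometry} applied at the exponent $p^\ast$, the function $\frac12\|\cdot\|_{p^\ast}^2$ is uniformly convex, but only of power type $p^\ast$, not strongly convex. So that route yields only Hölder smoothness, which is too weak.

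Consequently, the direct plan for $p<2$ is to prove the 2-uniform smoothness inequality
\[
    \tfrac12\|u+w\|_p^2+\tfrac12\|u-w\|_p^2\le\|u\|_p^2+\tfrac{1}{p-1}\|w\|_p^2 .
\]
We then convert this midpoint inequality into a Bregman bound by the standard iteration or convexity argument. An alternative is to cite the known sharp modulus of smoothness of $L_p$ for $1<p<2$ (Ball--Carlen--Lieb). This step is the main obstacle, and we expect to lean on that classical result rather than reprove it.
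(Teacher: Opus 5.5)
\textbf{Gap: the case $1<p<2$ cannot be closed.} The inequality you plan to prove there is false, and so is the proposition in that range. Take $m=d=2$, $A=I$, $b=0$, $u=(1,0)$, $w=(0,t)$. Then
\[
\tfrac12\|u+w\|_p^2+\tfrac12\|u-w\|_p^2=(1+|t|^p)^{2/p}=1+\tfrac{2}{p}|t|^p+o(|t|^p),
\]
which exceeds $\|u\|_p^2+\tfrac{1}{p-1}\|w\|_p^2=1+\tfrac{1}{p-1}t^2$ for small $t$, because $p<2$. Likewise $D_f(u+w,u)=(1+|t|^p)^{2/p}-1\sim\tfrac{2}{p}|t|^p$, and no bound $\tfrac{L}{2}t^2$ can dominate this. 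Equivalently, the second coordinate of $\nabla(\tfrac12\|\cdot\|_p^2)$ at $(1,t)$ is $(1+|t|^p)^{(2-p)/p}|t|^{p-2}t$, which is only $(p-1)$-H\"older in $t$.

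Ball--Carlen--Lieb does not rescue this. For $1<p\le2$ it gives 2-uniform \emph{convexity} of $\ell_p$ with constant $p-1$. The 2-uniform smoothness inequality with constant $p-1$ holds only for $p\ge2$. Your inequality with constant $\frac{1}{p-1}=p^\ast-1$ is the correct smoothness inequality for the dual space $\ell_{p^\ast}$, transplanted to $\ell_p$. Your own intermediate remark, that duality only yields H\"older-type smoothness of $\frac12\|\cdot\|_p^2$, was the right diagnosis: for $p<2$ the modulus of smoothness of $\ell_p$ is of power type $p$, not $2$.

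\textbf{The paper's proof has the same defect.} It takes $\|\nabla\Phi(u)-\nabla\Phi(v)\|_{p^\ast}\le C_p\|u-v\|_p$ with $C_p=\max\{p-1,\frac{1}{p-1}\}$ as a standard fact for all $1<p<\infty$. This fails for $p<2$ by the example above.

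\textbf{What survives is the ``in particular'' claim for $p\ge2$, and there your argument is correct and complete.} The Hessian bound $\nabla^2h(u)[w,w]\le(p-1)\|w\|_p^2$, together with your density argument, gives $D_g(u,v)\le(p-1)\|u-v\|_p^2$. The identity $D_f(x,y)=D_g(Ax-b,Ay-b)$ and $\|A(x-y)\|_p\le\|A\|_{p\to p}\|x-y\|_p$ then give $L=2(p-1)\|A\|_{p\to p}^2$. Compared with the paper, you work with Bregman divergences rather than gradient Lipschitzness, so you do not need $\|A^\top\|_{p^\ast\to p^\ast}=\|A\|_{p\to p}$. You also prove the smoothness of $\frac12\|\cdot\|_p^2$ rather than citing it.
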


\begin{proof}
We use the standard smoothness fact for \(\ell_p\): the map
\(
\Phi(z):=\frac12\|z\|_p^2
\)
satisfies
\begin{equation}\label{eq:1}
\|\nabla \Phi(u)-\nabla \Phi(v)\|_{p^\ast}
\le C_p\|u-v\|_p .
\end{equation}
Equivalently, \(z\mapsto \|z\|_p^2=2\Phi(z)\) is \(2C_p\)-smooth.

Now write
\[
f(x)=\|Ax-b\|_p^2 .
\]
By the chain rule,
\[
\nabla f(x)=A^\top \nabla \|\cdot\|_p^2(Ax-b).
\]
Therefore
\[
\begin{aligned}
    \|\nabla f(x)-\nabla f(y)\|_{p^\ast}
&=
\left\|A^\top\Big(\nabla\|\cdot\|_p^2(Ax-b)
-\nabla\|\cdot\|_p^2(Ay-b)\Big)\right\|_{p^\ast} \\
&\le
\|A^\top\|_{{p^\ast}\to {p^\ast}}
\left\|\nabla\|\cdot\|_p^2(Ax-b)
-\nabla\|\cdot\|_p^2(Ay-b)\right\|_{p^\ast} \\
&\le
2C_p\|A^\top\|_{{p^\ast}\to {p^\ast}}\|A(x-y)\|_p \\
&\le
2C_p\|A^\top\|_{{p^\ast}\to {p^\ast}}\|A\|_{p\to p}\|x-y\|_p .
\end{aligned}
\]
Finally, by duality,
\[
\|A^\top\|_{{p^\ast}\to {p^\ast}}=\|A\|_{p\to p}.
\]
Hence
\[
\|\nabla f(x)-\nabla f(y)\|_{p^\ast}
\le
2C_p\|A\|_{p\to p}^2\|x-y\|_p .
\]
\end{proof}

\subsection{Logistic regression}
\begin{proposition}[Smoothness of logistic regression in \(\ell_p\)]
\label{prop:logistic-regression-smoothness}
Let \(1<p<\infty\), let \(p^\ast\) be the dual exponent of \(p\), let
\(a_i\in\mathbb{R}^d\), \(y_i\in\{-1,1\}\), and define
\[
F(x):=\frac1n\sum_{i=1}^n
\log\bigl(1+\exp(-y_i\langle a_i,x\rangle)\bigr).
\]
If \(\|a_i\|_{p^\ast}\le 1\) for all \(i\), then \(F\) is \(1/4\)-smooth
with respect to \(\|\cdot\|_p\), i.e.
\[
\|\nabla F(x)-\nabla F(y)\|_{p^\ast}
\le
\frac14\|x-y\|_p .
\]
\end{proposition}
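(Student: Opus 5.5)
The statement to prove is the last one: logistic regression with $\|a_i\|_{p^\ast}\le1$ is $1/4$-smooth with respect to $\|\cdot\|_p$, in the gradient-Lipschitz form stated. The plan is to exploit the rank-one structure of each summand and reduce everything to the scalar logistic loss.

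\emph{Step 1: reduce to one-dimensional derivatives.} Write $\ell(t)=\log(1+e^{-t})$. Its derivative is $\ell'(t)=-1/(1+e^{t})$, and its second derivative is $\ell''(t)=\sigma(t)(1-\sigma(t))\le 1/4$, where $\sigma$ is the sigmoid. Hence $\ell'$ is $1/4$-Lipschitz on $\mathbb{R}$.

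\emph{Step 2: compute and compare gradients.} By the chain rule,
\[
\nabla F(x)=\frac1n\sum_{i=1}^n y_i\,\ell'(y_i\langle a_i,x\rangle)\,a_i .
\]
Therefore, by the triangle inequality in $\|\cdot\|_{p^\ast}$,
\[
\|\nabla F(x)-\nabla F(y)\|_{p^\ast}\le\frac1n\sum_{i=1}^n
\bigl|\ell'(y_i\langle a_i,x\rangle)-\ell'(y_i\langle a_i,y\rangle)\bigr|\,\|a_i\|_{p^\ast},
\]
using $|y_i|=1$.

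\emph{Step 3: bound each scalar difference.} The $1/4$-Lipschitz property of $\ell'$ bounds each scalar difference by $\tfrac14|\langle a_i,x-y\rangle|$. H\"older's inequality then gives $|\langle a_i,x-y\rangle|\le\|a_i\|_{p^\ast}\|x-y\|_p$. Combined with $\|a_i\|_{p^\ast}\le1$, each summand is at most $\tfrac14\|x-y\|_p$. Averaging over $i$ yields the claim.

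None of these steps is a real obstacle. The only point needing care is the uniform bound $\sigma(1-\sigma)\le1/4$, which is the AM--GM inequality, and keeping the $p$ versus $p^\ast$ pairing correct in H\"older's inequality. If the function-value form of smoothness in \cref{def:uniform-convexity-holder-smoothness} is wanted instead, integrate the gradient bound along the segment from $y$ to $x$. This gives $D_F(x,y)\le\tfrac18\|x-y\|_p^2$, which is $L$-smoothness with $L=1/4$ in the paper's convention.
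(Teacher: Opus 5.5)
Your proof is correct and follows essentially the same argument as the paper: both use the rank-one structure of each summand, the bound $\ell''\le 1/4$, and H\"older's inequality to extract the factor $\|a_i\|_{p^\ast}^2\le 1$. The only difference is ordering: you apply the scalar Lipschitz bound on $\ell'$ and then the triangle inequality, whereas the paper bounds $\|\nabla^2 F(z)h\|_{p^\ast}\le\frac14\|h\|_p$ and then integrates the Hessian along the segment, so your version avoids the vector-valued integration step.
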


\begin{proof}
Let
\[
\ell_i(x):=\log\bigl(1+\exp(-y_i\langle a_i,x\rangle)\bigr),
\qquad
\phi(s):=\log(1+\exp(-s)).
\]
Then \(\ell_i(x)=\phi(y_i\langle a_i,x\rangle)\). A direct computation gives
\[
\phi''(s)=\frac{e^{-s}}{(1+e^{-s})^2}\le \frac14,
\qquad \forall s\in\mathbb{R}.
\]

By the chain rule,
\[
\nabla^2 \ell_i(x)
=
\phi''(y_i\langle a_i,x\rangle)\, a_i a_i^\top.
\]

Fix any direction \(h\in\mathbb{R}^d\). Using the above, we obtain
\[
\nabla^2 \ell_i(x)h
=
\phi''(y_i\langle a_i,x\rangle)\,\langle a_i,h\rangle\, a_i.
\]
Taking \(\|\cdot\|_{p^\ast}\),
\[
\|\nabla^2 \ell_i(x)h\|_{p^\ast}
=
\phi''(y_i\langle a_i,x\rangle)
|\langle a_i,h\rangle|\,\|a_i\|_{p^\ast}.
\]

Applying Hölder's inequality,
\[
|\langle a_i,h\rangle|
\le
\|a_i\|_{p^\ast}\|h\|_p,
\]
and using \(\phi''\le 1/4\) in the preceding Hessian expression,
\[
\|\nabla^2 \ell_i(x)h\|_{p^\ast}
\le
\frac14\,\|a_i\|_{p^\ast}^2\,\|h\|_p.
\]

Averaging over \(i\),
\[
\|\nabla^2 F(x)h\|_{p^\ast}
=
\left\|\frac1n\sum_{i=1}^n \nabla^2 \ell_i(x)h\right\|_{p^\ast}
\le
\frac1{4n}\sum_{i=1}^n \|a_i\|_{p^\ast}^2\,\|h\|_p.
\]

Under the assumption \(\|a_i\|_{p^\ast}\le 1\),
\[
\|\nabla^2 F(x)h\|_{p^\ast}
\le
\frac14 \|h\|_p.
\]

Finally, integrating the Hessian bound along the segment between \(x\) and \(y\),
\[
\|\nabla F(x)-\nabla F(y)\|_{p^\ast}
\le
\sup_{z}\sup_{\|h\|_p=1}
\|\nabla^2 F(z)h\|_{p^\ast}\,\|x-y\|_p
\le
\frac14\|x-y\|_p.
\]
\end{proof}

\end{document}